\long\def\MSC#1\EndMSC{\def\arg{#1}\ifx\arg\empty\relax\else {\narrower\noindent%
{2010 Mathematics Subject Classification}: #1\\} \fi}
\long\def\PACS#1\EndPACS{\def\arg{#1}\ifx\arg\empty\relax\else
     {\narrower\noindent%
{PACS numbers}: #1}\fi}
\long\def\KEY#1\EndKEY{\def\arg{#1}\ifx\arg\empty\relax\else
	{\narrower\noindent%
Keywords: #1\\}\fi}
\numberwithin{equation}{section}
\theoremstyle{plain}
\newtheorem{theorem}{Theorem}[section]
\newtheorem{lemma}[theorem]{Lemma}
\theoremstyle{definition}
\newtheorem{definition}[theorem]{Definition}
\theoremstyle{remark}
\newtheorem{remark}[theorem]{Remark}
\newcommand{\N}{\mathbb{N}}	
\newcommand{\Z}{\mathbb{Z}}
\newcommand{\R}{\mathbb{R}}
\newcommand{\C}{\mathbb{C}}
\newcommand{\J}{\mathbf{j}}
\newcommand{\K}{\mathbf{k}}
\newcommand{\T}{\mathcal{Q}}
\def\bb1{{\rm{1}\hspace{-3pt}\mathbf{l}}}
\renewcommand{\d}{\;\mathrm{d}} 
\renewcommand{\epsilon}{\varepsilon}
\renewcommand{\phi}{\varphi}
\begin{document}

\title{Sharp iteration asymptotics for transfer operators induced by greedy $\beta$-expansions}


\author[add1]{Horia D. Cornean}
\ead{cornean@math.aau.dk}

\author[add1]{Kasper S. S{\o}rensen\corref{cor1}}
\ead{kasper@math.aau.dk}

\cortext[cor1]{Corresponding author}

\address[add1]{Department of Mathematical Sciences, Aalborg University, Thomas Manns Vej 23, 9220 Aalborg, Denmark}

\begin{abstract}
We consider base-$\beta$ expansions of Parry's type, where $a_0 \geq a_1 \geq 1$ are integers and $a_0<\beta <a_0+1$ is the positive solution to $\beta^2 = a_0\beta + a_1$ (the golden ratio corresponds to $a_0=a_1=1$). The map $x\mapsto \beta x-\lfloor \beta x\rfloor$ induces a discrete dynamical system on the interval $[0,1)$ and we study its associated transfer (Perron-Frobenius) operator $\mathscr{P}$. Our main result can be roughly summarized as follows: we explicitly construct two piecewise affine functions $u$ and $v$ with $\mathscr{P}u=u$ and $\mathscr{P}v=\beta^{-1} v$ such that for every sufficiently smooth $F$ which is supported in $[0,1]$ and satisfies $\int_0^1 F \d x=1$, we have $\mathscr{P}^kF= u +\beta^{-k}\big ( F(1)-F(0)\big )v +o(\beta^{-k})$ in $L^\infty$. This is also compared with the case of integer bases, where more refined asymptotic formulas are possible.
\end{abstract}

\begin{keyword}
Transfer Operator \sep%
Greedy $\beta$-expansion \sep%
Iteration asymptotics

\MSC[2020] 37 \sep 39 \sep 40 \sep 47
\end{keyword}

\maketitle

\tableofcontents

\section{Introduction and main results}
For any real $\beta>1$ we may define a discrete dynamical system on the interval $[0,1)$ induced by the map 
\begin{equation}\label{hc00} 
T_\beta: [0,1)\to [0,1),\quad T_\beta(x):=\beta x-\lfloor \beta x\rfloor.
\end{equation}
Every such system induces for $x\in [0,1)$ a ``greedy'' $\beta$-expansion of the type
\begin{equation}\label{hc000}
x=\sum_{k\geq 1} x_k \beta^{-k},\quad x_k:=\lfloor \beta T_\beta^{k-1}(x)\rfloor. 
\end{equation}
The ergodic properties of such a dynamical system are strongly related to the properties of the so-called composition (Koopman) operator given by 
\[
(K_\beta f)(x)= f(T_\beta(x)),\quad f\in L^\infty([0,1]),  
\]
and its associated transfer (Perron-Frobenius) operator \cite{Suz,Wal,LY, CHM}  $P_\beta: L^1([0,1])\to L^1([0,1])$ which is defined by ``duality'':
\begin{equation}\label{ks1}
\int_0^1(P_\beta f)(x) g(x)\d x= \int_0^1 f(x) (K_\beta g)(x)\d x,\quad f\in L^1([0,1]), \, g\in L^\infty([0,1]). 
\end{equation}
In particular, $T_\beta$ has a unique absolutely continuous invariant probability measure \cite{Bl,BG,DK,DKr,Go} with density $u_\beta\geq 0$ if $u_\beta$ is the unique solution to the eigenvalue problem 
\begin{equation}\label{eq:hc107}
P_\beta \, u =u,\quad u \geq 0 \,\,  \text{a.e.},\quad \int_0^1 u (x)  \d x=1.
\end{equation}
Since $P_\beta$ is a Markov operator \cite{Br} it follows that $\Vert P_\beta\Vert_{L^1\to L^1}=1$. Using this, one possible way of proving the existence and uniqueness of a solution to \eqref{eq:hc107} is to investigate whether the sequence $\{P_\beta^kf\}_{k\geq 1}$ converges in $L^1$ when $k\to\infty$, and the limit is independent of the initial condition $f$, provided $\int_0^1 f \d x=1$ and $f$ belongs to a dense subset of $L^1$. A more refined convergence result (see \cite{CHM,HMS,HMS2} for certain classes of $\beta$'s) is to show that if the initial condition $f$ has a certain regularity, for example if it belongs to $C^1([0,1])$, then there exist two constants $C\geq 0$ and $1/2 \leq c<1$ such that for every $f\in C^1([0,1])$ one has
\[
\Big \Vert P_\beta^k f- u_\beta \int_0^1 f\d t \Big \Vert_{L^1}\leq \Vert f\Vert_{C^1([0,1])}\, C\, \beta ^{-c\, k}.
\]
Related results regarding the speed of convergence when the Perron-Frobenius operator is restricted to sufficiently smooth functions, were obtained in \cite[Theorem 2.5]{Li} and in \cite{BKL,Mo1,Mo2,PS}.

The main goal of the current paper is to identify certain classes of $\beta$'s and $f$'s for which it is possible to derive a two-term asymptotic expansion, where the error is proportional to $\beta^{-ck}$ with $c>1$.

\subsection{The integer bases are special}
Let us assume that $\beta\equiv q\geq 2$ is a natural number. In this case \cite{Ga, HMS}:
\begin{align*}
   (P_q f)(x)\equiv (\T f)(x)=\frac{1}{q}\sum_{j=0}^{q-1}f\bigg (\frac{x+j}{q}\bigg ).
\end{align*}
Let us define ${B}_0(x)=1$ and ${B}_1(x)=x-1/2$ for $x\in [0,1)$ and $\Z$-periodically extend both $B_0$ and $B_1$ to all of $\mathbb{R}$; the periodic functions are denoted by $\widetilde{B}_0$ and $\widetilde{B}_1$. Let $n\geq 2$ and define 
\begin{align*}
\widetilde{B}_n(x)\coloneqq -n!\sum_{m\neq 0} e^{2\pi i m x}\, (2\pi i m)^{-n},\quad \frac{d\widetilde{B}_n}{dx}(x)= n\widetilde{B}_{n-1}(x),\quad 0<x<1.
\end{align*}
Then $\widetilde{B}_n$ are $\mathbb{Z}$-periodic functions which coincide with the Bernoulli polynomials $B_n$ on $(0,1)$ for $n \in \mathbb{N}$. If $f\in C^N([0,1])$ with $N\geq 1$, the Euler-Bernoulli approximation formula \cite{AS} reads as:
\begin{equation}\label{june17}
    \begin{aligned}
     f(x)=   \int_0^1 f(t) \d t &+\sum_{s=1}^N \big ( f^{(s-1)}(1)-f^{(s-1)}(0)\big ) \frac{\widetilde{B}_s(x)}{s!}  \\
     &-\int_{0}^{1}\, f^{(N)}(u)\, \frac{\widetilde{B}_N(x-u)}{N!} \d u,\quad 0< x< 1.  
    \end{aligned}
\end{equation}
For completeness, we will give a short proof of this formula in \ref{sec:eulerbernoulliformula}. In \ref{lem:lemA2} We will also show that (the case $q=2$ has been shown in \cite{Dr})
\[\T \widetilde{B}_j = q^{-j} \widetilde{B}_j,\quad j\geq 0.\]
This means that the Bernoulli polynomials are joint eigenfunctions for all possible transfer operators when $\beta\equiv q\geq 2$ is an integer.  Introducing this in \eqref{june17} leads to
\begin{equation}\label{hc2}
\begin{aligned}
  (\T^k f)(x)&=\int_0^1 f(t) \d t +\sum_{j=1}^N \big (q^{-j}\big )^k \Big (f^{(j-1)}(1)-f^{(j-1)}(0)\Big )\frac{\widetilde{B}_j(x)}{j!}\\ 
  &\qquad - q^{- N  k}\int_0^1 f^{(N)}(t)\, \frac{\widetilde{B}_N(x-t)}{N!}  \d t,\quad \forall \, k\geq 1.
  \end{aligned} 
\end{equation}
This not only proves that the density $u_\beta$ of the invariant absolutely continuous probability measure always equals $\chi_{[0,1]}$ for all integers $q\geq 2$, i.e. the invariant measure itself always equals Lebesgue measure (this is a well-known fact, see e.g. \cite[Example 6.3.2]{DK} or \cite[Proposition 2.2]{CHMSS}), but we also get a complete asymptotic expansion with terms proportional with $q^{-k}$, $q^{-2k}$ and so on, depending on how smooth $f$ is. We will see that such a strong result is generally not possible when $\beta$ is not an integer.

\subsection{The main result}
As we have already mentioned, our main interest is to investigate whether more refined asymptotic formulas like \eqref{hc2} exist for $\beta$-expansions \cite{Pa,Re} when the base is not an integer.
In the current paper we will only focus on the special case where $\beta>1$ is the unique positive solution to
\begin{align}\label{eq:fundamentaleqmoregeneral}
    1 = \frac{a_0}{\beta} + \frac{a_1}{\beta^2},\quad a_0\geq a_1\geq 1,
\end{align}
with $a_0,a_1 \in \mathbb{N}.$
In this simple case $\beta$ can be explicitly computed:
\begin{align}\label{eq:a0betaa0plus1}
 \beta= \frac{a_0+\sqrt{a_0^2+4a_1}}{2},\quad    a_0 < \beta < a_0+1\, .
\end{align}
Furthermore, the transfer operator $P_\beta \equiv \mathscr{P}$, defined on $L^1$ functions supported on $[0,1]$, is given by:
\begin{align}\label{ks5}
    (\mathscr{P} f)(x) = \frac{1}{\beta}\sum_{j=0}^{a_0} f\bigg( \frac{x+j}{\beta}\bigg).
\end{align}
This is a special case of a more general formula for piecewise $C^2$ monotone interval maps, see \cite[(6.8)]{DK}. We include the proof in Lemma~\ref{lemmaA1} for completeness.
Let us introduce the following four piecewise affine functions:
\begin{equation}
\begin{aligned}\label{ks6}
    & \psi_1(x) \coloneqq \chi_{[0,1]}(x), \quad 
    \psi_2(x) \coloneqq \frac{\beta}{a_1}\chi_{[0,a_1\beta^{-1}]}(x),\\
    &\psi_3(x) \coloneqq  4\bigg(x-\frac{1}{2}\bigg) \chi_{[0,1]}(x), \quad 
    \psi_4(x) \coloneqq  \frac{4\beta^2}{a_1^2} \bigg(x-\frac{a_1}{2\beta}\bigg) \chi_{[0,a_1\beta^{-1}]}(x),
\end{aligned}
\end{equation}
where $\Vert \psi_j\Vert_{L^1}=1,\, j\in\{1,2,3,4\}$ (see Figure~\ref{fig:psionetofour}). We will need three linear combinations of them:
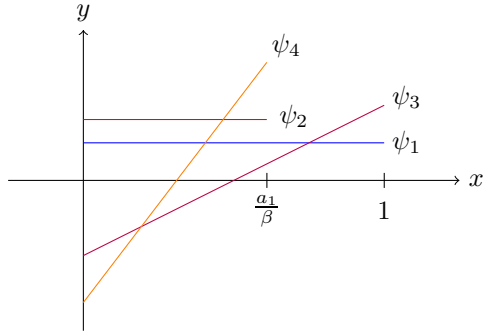
\begin{figure}[!h]
\begin{center}
\begin{tikzpicture}
  \draw[->] (-1, 0) -- (5, 0) node[right] {$x$};
  \draw[->] (0, -2) -- (0, 2) node[above] {$y$};
  \draw[xscale=4, yscale=0.5, domain=0:1, smooth, variable=\x, blue] plot ({\x}, {1});
  \draw[xscale=4, yscale=0.5,domain=0:0.61, smooth, variable=\y, red]  plot ({\y}, 1.62);
  \draw[xscale=4, yscale=0.5,domain=0:1, smooth, variable=\x, purple] plot ({\x}, {4*(\x-1/2)});
  \draw[xscale=4, yscale=0.5,domain=0:0.61, smooth, variable=\y, orange]  plot ({\y}, {4*2.62*(\y-0.31)});
  \draw[-] (4, -0.1) -- (4, 0.1) node[below] {};
  \draw[-] (2.44, -0.1) -- (2.44, 0.1) node[below] {};
  \node[] at (2.44, -0.4)   (a) {$\frac{a_1}{\beta}$};
  \node[] at (4, -0.4)   (a) {$1$};
   \node[] at (4.3, 0.5)   (a) {$\psi_1$};
\node[] at (2.8, 0.85)   (a) {$\psi_2$};
  \node[] at (4.3, 1.1)   (a) {$\psi_3$};
  \node[] at (2.7, 1.8)   (a) {$\psi_4$};
\end{tikzpicture}
\caption{Plot of $\psi_1$ (blue), $\psi_2$ (red), $\psi_3$ (purple) and $\psi_4$ (orange).}
\label{fig:psionetofour}
\end{center}
\end{figure}
 \begin{equation}\label{hc20}
    \begin{aligned}
        &\tilde{u}_1= \frac{\beta^2}{\beta^2+a_1} \, \psi_1 +\frac{a_1}{\beta^2+a_1} \, \psi_2 ,\\ 
        & \tilde{u}_2= \frac{a_1}{\beta^2+a_1} \, \psi_1 -\frac{a_1}{\beta^2+a_1} \, \psi_2 ,\\
        &\tilde{u}_3= -\frac{2a_0a_1\beta}{(\beta+a_1)(\beta^2+a_1)} \psi_1 +\frac{2a_0a_1\beta}{(\beta+a_1)(\beta^2+a_1)}\psi_2+\frac{\beta^2}{(\beta^2+a_1)} \psi_3+\frac{a_1^2\beta^{-1}}{(\beta^2+a_1)}\psi_4.
    \end{aligned}
\end{equation}

\begin{theorem}\label{thm1} 
     Let $\mathscr{P}$ be the transfer operator in \eqref{ks5}. Then the functions $\tilde{u}_1$, $\tilde{u}_2$ and $\tilde{u}_3$ in \eqref{hc20} are eigenfunctions for $\mathscr{P}$ in $L^\infty([0,1])$ and  
    \begin{equation}\label{hor5}
    \begin{aligned}
    &\mathscr{P}{\tilde{u}}_1={\tilde{u}}_1,\quad \mathscr{P}{\tilde{u}}_2=\Big (-\frac{a_1}{\beta^2}\Big ){\tilde{u}}_2,\quad \mathscr{P}{\tilde{u}}_3=\beta^{-1}{\tilde{u}}_3,\\
    &\int_0^1 \tilde{u}_1\d x=1,\quad \int_0^1 \tilde{u}_2\d x=\int_0^1 \tilde{u}_3\d x=0.
    \end{aligned}
    \end{equation}
    Moreover, for every fixed $N\in \mathbb{N}$ which satisfies 
    \begin{align*}
        N> \frac{3\ln \big (\frac{\beta^2}{a_1}\big )}{\ln\big(\frac{\beta}{a_1}\big)},
    \end{align*} 
    there exist $0<\varepsilon<1$ and  $C>0$ such that if $F \in C^N([0,1])$ then
    \begin{align}\label{hor6}
       \Big \Vert \big(\mathscr{P}^k F\big)(\cdot) - \tilde{u}_1(\cdot) \int_0^1 F \d x  - \beta^{-k}\, \tilde{u}_3(\cdot)\, \frac{F(1)-F(0)}{4} \,  \Big\Vert_{L^\infty([0,1])} \leq C \,\Vert F\Vert_{C^N([0,1])} \beta^{-k(1+\varepsilon)},
    \end{align}
for $k$ sufficiently large.
\end{theorem}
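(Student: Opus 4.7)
The eigenfunction identities in \eqref{hor5} follow by direct computation. Applying \eqref{ks5} to each $\psi_i$ and using the identity $\beta - a_0 = a_1/\beta$ from \eqref{eq:fundamentaleqmoregeneral} shows that $\mathscr{P}$ preserves the four-dimensional space $V = \mathrm{span}\{\psi_1, \psi_2, \psi_3, \psi_4\}$ of functions that are affine on each of the two intervals $[0, a_1/\beta]$ and $(a_1/\beta, 1]$: composition with the contractions $x \mapsto (x+j)/\beta$ preserves piecewise affine structure, and the additional summand $\frac{1}{\beta}f((x+a_0)/\beta)\chi_{[0,a_1/\beta]}(x)$ in \eqref{ks5} respects the single partition point $a_1/\beta$. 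Writing out the $4\times 4$ matrix of $\mathscr{P}|_V$ in the basis $(\psi_1, \psi_2, \psi_3, \psi_4)$ and diagonalizing it then yields three of the four eigenpairs, namely $(\tilde u_1, 1)$, $(\tilde u_2, -a_1/\beta^2)$ and $(\tilde u_3, \beta^{-1})$. The integral identities follow from $\int_0^1 \psi_i \d x = 1$ combined with the coefficients chosen in \eqref{hc20}.

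\textbf{Reducing the asymptotic to a remainder estimate.} Setting
\[
R := F - \Bigl(\int_0^1 F \d x\Bigr)\tilde u_1 - \frac{F(1) - F(0)}{4}\,\tilde u_3,
\]
and using the eigenvalue identities, the bound \eqref{hor6} reduces to showing
\[
\|\mathscr{P}^k R\|_{L^\infty([0,1])} \leq C\,\|F\|_{C^N([0,1])}\,\beta^{-k(1+\varepsilon)}.
\]
The choice of coefficients is modelled on the analogy with \eqref{june17}--\eqref{hc2}: $\tilde u_1$ plays the role of $\widetilde B_0 = 1$ (absorbing the mean) while $\tilde u_3$ plays the role of $\widetilde B_1$ (absorbing the boundary jump $F(1)-F(0)$), with the factor $1/4$ arising from the $L^1$-normalization $\|\psi_3\|_{L^1} = 1$. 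I would make this rigorous by proving an Euler--Bernoulli-type identity adapted to the Parry partition that expresses $R$ as an integral of $F^{(N)}$ against a kernel built from affine images of the basis of $V$.

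\textbf{Decay of the remainder.} This is the most delicate step of the argument. I would iterate \eqref{ks5}: each application rewrites $R$ as a sum of affinely rescaled copies on sub-intervals of length $\sim\beta^{-1}$ plus a boundary contribution carried by $\chi_{[0, a_1/\beta]}$. The crucial number-theoretic input is that the orbit of $1$ under $T_\beta$ is finite and explicit, namely $T_\beta(1) = a_1/\beta$ and $T_\beta^2(1) = 0$, so that the partition generated by the iterated discontinuities remains structurally simple. Since the subtraction has removed the components of $F$ along the two largest-modulus eigenvalues $1$ and $\beta^{-1}$, the remainder $R$ lies in the stable complement of $\mathrm{span}\{\tilde u_1, \tilde u_3\}$, and each application of $\mathscr{P}$ should contract its $L^\infty$ norm by a factor of at least $\beta^{-1}(\beta/a_1)^{-\gamma}$ for some $\gamma = \gamma(N) > 0$. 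The hypothesis $N > 3\ln(\beta^2/a_1)/\ln(\beta/a_1)$ is tailored precisely so that this excess contraction strictly beats $\beta^{-k}$, producing the claimed rate $\beta^{-k(1+\varepsilon)}$. Balancing the smoothness budget $N$ against the two competing rates $\beta$ and $\beta/a_1$ is what makes this step technically involved.
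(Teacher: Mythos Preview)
Your eigenfunction argument matches the paper's: both compute the action of $\mathscr{P}$ on $V=\mathrm{span}\{\psi_1,\psi_2,\psi_3,\psi_4\}$ and diagonalize the resulting block-triangular $4\times4$ matrix.

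The decay step, however, has a genuine gap. You assert that subtracting the $\tilde u_1$ and $\tilde u_3$ components places $R$ in a ``stable complement'' on which each application of $\mathscr{P}$ contracts the $L^\infty$ norm by a factor strictly smaller than $\beta^{-1}$. But $R$ does not lie in any finite-dimensional invariant subspace --- it is a generic $C^N$ function with zero mean and $R(0)=R(1)$ --- and $\mathscr{P}$ is not even a contraction on $L^\infty$: by Lemma~\ref{lem:propertiesofmathscrP}(ii) one only has $\|\mathscr{P}\|_{L^\infty\to L^\infty}\le (a_0+1)/\beta$, which is strictly larger than $1$ since $\beta<a_0+1$. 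There is no spectral-gap argument available here (as the paper remarks, the $L^p$ spectrum of $\mathscr{P}$ is the full closed unit disk), so the iteration-by-iteration contraction you invoke cannot be justified.

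The paper's mechanism is different and hinges on an auxiliary scale $M=\lfloor 3k/N\rfloor$ growing with $k$. One writes an Euler--Bernoulli approximation of $F$ on a partition of $[0,1]$ into intervals of length $\sim\beta^{-M}$, with building blocks $G_{s,\K_n}^{\J_n}$ that are Bernoulli polynomials localized to each piece, plus a remainder of size $\beta^{-MN}\|F^{(N)}\|_\infty$. The crucial structural fact (Lemma~\ref{lemmacrux}) is that applying $\mathscr{P}$ exactly $|\K_n|\simeq M$ times to any such block lands \emph{exactly} on $\psi_{2s+1}$ in the finite-dimensional invariant subspace, independently of the position $\J_n$. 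For $k>M$ one then applies $\mathscr{P}^{k-M}$ to these $\psi$'s using the matrix diagonalization, while the Euler--Bernoulli remainder is propagated with the crude bound $\|\mathscr{P}^k\|_{L^\infty\to L^\infty}\le\beta^k$, producing an error $\beta^{k-MN}$. Balancing the three competing errors $(a_1/\beta^2)^{k-M}$, $\beta^{-k-M}$ and $\beta^{k-MN}$ via the choice of $M$ is precisely what generates both the lower bound on $N$ and the exponent $\varepsilon$. Your outline is missing the exact-landing lemma and the two-parameter $(k,M)$ balancing; without them the argument does not close.
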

\begin{remark}
When $a_0=a_1$, it is proved in \cite{CHM} (even for the more general case with $a_0=\ldots =a_{n-1}$, $n\geq 2$) that the spectrum of $\mathscr{P}$ seen as an operator in $L^p([0,1])$, $1\leq p\leq 2$, equals the closed unit complex disk. Moreover, the whole open unit disk consists of eigenvalues, hence the eigenvalue $1$ is far from being isolated.  
\end{remark}

\begin{remark}
    Let $F \in C^\infty([0,1])$ be given by $F(x)=\psi_1(x)=\chi_{[0,1]}(x)$. We see from the first two equations of \eqref{hc20} that $F=\tilde{u}_1+\tilde{u}_2$, and from the first equation of \eqref{hor5} we have: 
    \begin{align}\label{eq:optimalex1}
        \mathscr{P}^k\, F = \tilde{u}_1+\bigg(-\frac{a_1}{\beta^2}\bigg)^k \, \tilde{u}_2, \quad \forall k\geq 1.
    \end{align}
    Using $\int_0^1 F(x)dx=1$ and $F(0)=F(1)$ in \eqref{hor6} we see that in general we cannot hope to have a third term of order $\beta^{-2k}$ in the expansion because 
    \begin{align*}
        \beta^{2k} \Big (\big(\mathscr{P}^k F\big)(y) - \tilde{u}_1(y)\Big ) = (-a_1)^k \tilde{u}_2(y)
    \end{align*}
    where the above right hand side does not converge when $k \to \infty$. We conjecture that if $F\in C^\infty([0,1])$, then  \eqref{hor6} only holds if 
    \[0<\epsilon <1-\frac{\ln(a_1)}{\ln(\beta)}.\]
    The $\epsilon$ we provide in our proof (see \eqref{epsil}) is far from the conjectured optimal value. 
\end{remark}

\begin{remark}
A related question is the following: given $a_0$ and $a_1$ (thus also $\beta$, see \eqref{eq:a0betaa0plus1}), what is the minimal value of $N$ for which the convergence in \eqref{hor6} holds? If $a_1=1$, our lower bound is $N>6$, but we conjecture that $N>2$ should be enough in this case. 
\end{remark}

\begin{remark}
    The case with $n> 2$ and with general coefficients $a_0,\ldots, a_{n-1}$ satisfying Parry's conditions \cite{Pa} remains open, but we believe that if $\beta$ is a Pisot number 
    (i.e. a positive algebraic integer greater than $1$, whose conjugate elements have absolute value less than $1$), then (a less explicit) two-term asymptotics would still be possible. The case in which infinitely many coefficients $a_k$ are non-zero is completely open. 
\end{remark}

\section{Proof of Theorem \ref{thm1}}
We will show in \ref{sec:4dinvariantsubspace} that the subspace ${\rm Span}\{\psi_j:\, 1\leq j\leq 4\}$ is left invariant by $\mathscr{P}$, while the $\tilde{u}_j$'s with $1\leq j\leq 3$ are some carefully chosen eigenfunctions corresponding to three of its eigenvalues: $1$, $-a_1/\beta^2$ and $\beta^{-1}$.  The value of the integrals in \eqref{hor5} can be checked using \eqref{hc20} and the fact that $\int_0^1 \psi_1 \d x=\int_0^1 \psi_2\d x=1$ and $\int_0^1 \psi_3 \d x=\int_0^1 \psi_4\d x=0$. 

From now on we will focus on the proof of \eqref{hor6}. The very first step consists in partitioning the interval $[0,1]$ in a clever way, so that any smooth function $F$ defined on it can be approximated by an ``Euler-Bernoulli expansion" (see \eqref{hor13}) where the building blocks are functions which have a ``good behavior'' when $\mathscr{P}$ is repeatedly applied to them (see Lemmas \ref{lemmacrux} and \ref{lemmaPk}).  

We start by splitting the interval $[0,1]$ into two intervals  $[0,a_0\beta^{-1}]$ and $[a_0\beta^{-1},1]$. Due to \eqref{eq:fundamentaleqmoregeneral}, the length of the latter interval equals $a_1\beta^{-2}$. These two intervals are then split into $a_0$ and $a_1$ intervals of length $\beta^{-1}$ and $\beta^{-2}$, respectively. The endpoints of these $a_0$ and $a_1$ intervals are given by (see Figure~\ref{fig:splitting} for an illustration of this splitting):
\begin{align*}
    t_{k_1}^{j_1} = \begin{cases}
        \frac{j_1}{\beta} & 0 \leq j_1 \leq a_0, \qquad  \textup{when } k_1=1 \\
        \frac{a_0}{\beta} + \frac{j_1}{\beta^2} & 0 \leq j_1 \leq a_1, \qquad  \textup{when } k_1=2
    \end{cases}.
\end{align*}
Note that this representation of points is not unique, for example $t_1^{a_0}=t_2^{0}$.

\begin{figure}[!h]
\begin{center}
\begin{tikzpicture}
  \draw[scale=10,-] (0, 0) -- (1, 0) node[right] {};
  \draw[-] (0, -0.1) -- (0, 0.1) node[below] {};
  \node[] at (0, -0.4)   (a) {$0$};
  \draw[-] (10, -0.1) -- (10, 0.1) node[below] {};
  \node[] at (10, -0.4)   (a) {$1$};
  \draw[-] (6.18, -0.1) -- (6.18, 0.1) node[below] {};
  \node[] at (6.18, -0.4)   (a) { $\frac{a_0}{\beta}$};

  \draw[-] (1, -0.1) -- (1, 0.1) node[below] {};
  \node[] at (1, -0.4)   (a) {$t_1^{1}$};
  \draw[-] (2, -0.1) -- (2, 0.1) node[below] {};
  \node[] at (2, -0.4)   (a) {$t_1^{2}$};
  \draw[-] (3, -0.1) -- (3, 0.1) node[below] {};
  \node[] at (3, -0.4)   (a) {$t_1^{3}$};

  \node[] at (4, -0.4)   (a) {$\cdots$};

  \draw[-] (5.18, -0.1) -- (5.18, 0.1) node[below] {};
  \node[] at (5.18, -0.4)   (a) {$t_1^{a_0-1}$};

  \draw[-] (6.93, -0.1) -- (6.93, 0.1) node[below] {};
  \node[] at (6.93, -0.4)   (a) {$t_2^{1}$};

   \draw[-] (7.68, -0.1) -- (7.68, 0.1) node[below] {};
  \node[] at (7.68, -0.4)   (a) {$t_2^{2}$};

    \node[] at (8.35, -0.4)   (a) {$\cdots$};

  \draw[-] (9.25, -0.1) -- (9.25, 0.1) node[below] {};
  \node[] at (9.25, -0.4)   (a) {$t_2^{a_1-1}$};

  \draw[black, decorate, decoration={brace, amplitude=5, raise=5}] (0, 0) -- (1, 0) node[pos=0.5, above=8] {$\frac{1}{\beta}$};

  \draw[black, decorate, decoration={brace, amplitude=5, raise=5}] (6.18, 0) -- (6.93, 0) node[pos=0.5, above=8] {$\frac{1}{\beta^2}$};
  
\end{tikzpicture}
\caption{Plot of the first splitting of the interval $[0,1]$.}
\label{fig:splitting}
\end{center}
\end{figure}
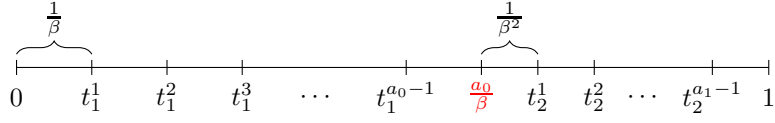

A further splitting gives smaller intervals with endpoints
\begin{align*}
    t_{k_1k_2}^{j_1j_2} = t_{k_1}^{j_1} + \beta^{-k_1} t_{k_2}^{j_2}
\end{align*}
and in general after $n$ splits
\begin{align*}
    t_{k_1k_2\cdots k_n}^{j_1j_2\cdots j_n} = t_{k_1k_2\cdots k_{n-1}}^{j_1j_2\cdots j_{n-1}} + \beta^{-k_1-k_2-\cdots - k_{n-1}} t_{k_n}^{j_n}.
\end{align*}
Let us introduce the notation 
\begin{equation}\label{hor7}
\begin{aligned}
\K_n&:=(k_1,k_2,\ldots,k_n),\quad \J_n:=(j_1,j_2,\ldots,j_n),\\
t_{\K_n}^{\J_n}&:=t_{k_1k_2\cdots k_n}^{j_1j_2\cdots j_n},\quad  t_{\K_n}^{\J_{n-1},j_n+1} := t_{k_1\cdots k_n}^{j_1\cdots j_{n-1} (j_n+1)},\\
 |\K_n|&:=k_1+k_2+\cdots +k_n.
\end{aligned}
\end{equation}
The points $t_{\K_n}^{\J_n}$ and $t_{\K_n}^{\J_{n-1},j_n+1}$ are ``consecutive'' points which obey 
\begin{equation}\label{hor9}
    t_{\K_n}^{\J_{n-1},j_n+1}-t_{\K_n}^{\J_{n}}=\beta^{-|\K_n|}. 
\end{equation}
By the rescaled Bernoulli approximation formula \eqref{BAF} with $b=t_{\K_n}^{\J_{n-1},j_n+1}$ and $a=t_{\K_n}^{\J_n}$ we have
\begin{equation}\label{hor8}
    \begin{aligned}
     F(y)&=  \beta^{|\K_n|} \int_{t_{\K_n}^{\J_n} }^{t_{\K_n}^{\J_{n-1},j_n+1} } F(u) \d u \\
     &\qquad +\sum_{s=1}^N \beta^{-(s-1)|\K_n|}\big ( F^{(s-1)}(t_{\K_n}^{\J_{n-1},j_n+1} )-F^{(s-1)}(t_{\K_n}^{\J_n} )\big ) \frac{\widetilde{B}_s\Big( \beta^{|\K_n|}\big (y-t_{\K_n}^{\J_n}\big )\Big)}{s!} \\
     &\qquad -\beta^{-N|\K_n|}\int_{0}^{1}\, F^{(N)}\big (t_{\K_n}^{\J_n} (1-u)+t_{\K_n}^{\J_{n-1},j_n+1} u\big )\, \frac{\widetilde{B}_N\Big( \beta^{|\K_n|}\big (y-t_{\K_n}^{\J_n}\big )-u\Big)}{N!} \d u \\
     &=   \beta^{|\K_n|} \int_{t_{\K_n}^{\J_n} }^{t_{\K_n}^{\J_{n-1},j_n+1} } F(u) \d u  \\
     &\qquad +\sum_{s=1}^N \beta^{-(s-1)|\K_n|}\big ( F^{(s-1)}(t_{\K_n}^{\J_{n-1},j_n+1} )-F^{(s-1)}(t_{\K_n}^{\J_n} )\big ) \frac{\widetilde{B}_s\Big( \beta^{|\K_n|}\big (y-t_{\K_n}^{\J_n}\big )\Big)}{s!}\\
     &\qquad +\mathcal{O}\Big( \beta^{-|\K_n|N}\Vert F^{(N)}\Vert_{L^\infty([0,1])}\Big),\quad \forall y\in \big [t_{\K_n}^{\J_n}\, ,\, t_{\K_n}^{\J_{n-1},j_n+1}\big ].
    \end{aligned}
    \end{equation} 
We now introduce a parameter $M\in \N$ which later on will be chosen to depend on $N$. Given any $M\in \N$, let us argue that we may find a partition of the interval $[0,1]$ such that the distance between any two consecutive points is either $\beta^{-M}$ or $\beta^{-(M+1)}$. 
\begin{enumerate}[label=(\roman*)]
    \item If $M=1$ then this already holds for the ``first layer'' of points $t_{k_1}^{j_1}$. 
    \item If $M=2$, then we leave the interval between $[a_0/\beta,1]$  untouched, while all the intervals $[t_1^{j_1},t_1^{j_1+1}]$ with $0\leq j_1\leq a_0-1$ have to be refined by going to the ``second layer'' of points of the form $t_{1 k_2}^{j_1j_2}$. The distance between two consecutive points of the second layer is either $\beta^{-2}$ when $k_2=1$ or $\beta^{-3}$ when $k_2=2$. 

\item If $M=3$ we need to further refine the intervals between points with $k_1=k_2=1$ (by going to the ``third layer'') and those with $k_1=2$ (by going to the ``second layer''). And so on. 
\end{enumerate}

By this refining procedure we end up with a partition of $[0,1]$ containing points from different layers such that any point in the partition obeys $|\K_n| \simeq M$ (meaning either equal to $M$ or $M+1$). 
\begin{definition}\label{Giz}
Let $B_s$ be the Bernoulli polynomials for $s \in \mathbb{N}_0$. Then we define
{\footnotesize    \begin{align*}
        G_{s,\K_n}^{\J_n} (x) \coloneqq \frac{\beta^{|\K_n|}}{\Vert B_s \Vert_{L^1([0,1])}}B_s \Big( \beta^{|\K_n|} \Big(x-t_{\K_n}^{\J_n}\Big)\Big) \, \chi_{\big[t_{\K_n}^{\J_n},t_{\K_n}^{\J_{n-1},j_n+1}\big]}(x).
    \end{align*}}
    and 
    \begin{equation}\label{hor10}
       \psi_{2s+1}(x):= \frac{B_{s}(x)}{\Vert B_{s} \Vert_{L^1([0,1])}} \chi_{[0,1]}(x),\quad \psi_{2(s+1)}(x) := 
    \frac{\beta}{a_1}\frac{B_s\big( \frac{\beta}{a_1}x \big)}{\big\Vert B_s  \big\Vert_{L^1([0,1])}} \chi_{[0,a_1\beta^{-1}]}(x),\quad  s\in \N_0. 
    \end{equation}
\end{definition}
\begin{remark}
    We note that when $s=0$ and $s=1$, the functions $\psi_1,\dots, \psi_4$ provided by \eqref{hor10} coincide with those introduced in \eqref{ks6}. 
\end{remark}

Using  \eqref{hor8} and Definition \ref{Giz} for  a partition of $[0,1]$ where the distance between any two consecutive points is $\beta^{-|\K_n|}\simeq\beta^{-M}$, we have in the sup-norm:
    \begin{equation}\label{hor13}
    \begin{aligned}
        F(y) &= \sum_{\K_n,\J_n} \bigg\{  G_{0,\K_n}^{\J_n} (y)\, \int_{t_{\K_n}^{\J_n}}^{t_{\K_n}^{\J_{n-1},j_n+1}} F(u) \d u   \\
        &\qquad + \sum_{s=1}^N \beta^{-|\K_n|s}\Big( F^{(s-1)}(t_{\K_n}^{\J_{n-1},j_n+1})-F^{(s-1)}(t_{\K_n}^{\J_{n}})\Big)  \frac{1}{s!}\Vert B_s \Vert_{L^1([0,1])} \, G_{s,\K_n }^{\J_n}(y)\bigg\} \\
        & \qquad +\mathcal{O}\Big(\beta^{-MN} \big \Vert F^{(N)}\Vert_{L^\infty([0,1])} \Big),\quad \forall y\in [0,1].
    \end{aligned}
    \end{equation}
    The following lemma plays a crucial role:
    \begin{lemma}\label{lemmacrux}
Let $s \in \mathbb{N}_0$. Then for almost every $x\in [0,1]$ and 
  {\bf independently of the choice of} $\J_n$ we have 
  \begin{align*}
        \Big(\mathscr{P}^{|\K_n|} G_{s,\K_n}^{\J_n}\Big)(x) = \psi_{2s+1}(x).
    \end{align*}
\end{lemma}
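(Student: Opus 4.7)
The plan is to test the claimed identity against an arbitrary $g\in L^\infty([0,1])$ using the duality \eqref{ks1}. Iterating $|\K_n|$ times gives
\begin{align*}
\int_0^1 \big(\mathscr{P}^{|\K_n|} G_{s,\K_n}^{\J_n}\big)(x)\, g(x)\d x
= \int_{t_{\K_n}^{\J_n}}^{t_{\K_n}^{\J_{n-1},j_n+1}} \frac{\beta^{|\K_n|}}{\Vert B_s\Vert_{L^1([0,1])}}\, B_s\Big(\beta^{|\K_n|}(x-t_{\K_n}^{\J_n})\Big)\, g\big(T_\beta^{|\K_n|}(x)\big)\d x,
\end{align*}
so the problem reduces to understanding $T_\beta^{|\K_n|}$ on the support $I_{\K_n}^{\J_n}:=[t_{\K_n}^{\J_n},t_{\K_n}^{\J_{n-1},j_n+1}]$ of $G_{s,\K_n}^{\J_n}$.

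The central geometric claim, to be proved by induction on $n$, is that $T_\beta^{|\K_n|}$ restricted to $I_{\K_n}^{\J_n}$ equals the affine map $x\mapsto \beta^{|\K_n|}(x-t_{\K_n}^{\J_n})$, and that this map sends $I_{\K_n}^{\J_n}$ bijectively onto $[0,1)$. For the base case $n=1$ with $k_1=1$, one checks directly that $\lfloor\beta x\rfloor=j_1$ on $[j_1/\beta,(j_1+1)/\beta]$ for $0\le j_1\le a_0-1$, so $T_\beta(x)=\beta x-j_1$. For $k_1=2$, one application of $T_\beta$ sends $I_{(2)}^{(j_2)}$ affinely onto $[j_2/\beta,(j_2+1)/\beta]$ (the floor being $a_0$), and since $j_2\le a_1-1\le a_0-1$, this image falls inside a regular branch, so a second application yields $[0,1)$ with overall slope $\beta^2$. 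For the inductive step, the recursion $t_{\K_n}^{\J_n}=t_{\K_{n-1}}^{\J_{n-1}}+\beta^{-|\K_{n-1}|} t_{k_n}^{j_n}$ combined with $I_{\K_n}^{\J_n}\subset I_{\K_{n-1}}^{\J_{n-1}}$ shows that the inductive hypothesis already carries $I_{\K_n}^{\J_n}$ affinely onto $[t_{k_n}^{j_n},t_{k_n}^{j_n+1}]=I_{(k_n)}^{(j_n)}$; the remaining $k_n$ iterations are then handled by the base case, and composing the affine factors gives the total slope $\beta^{|\K_n|}$.

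Granted this claim, we substitute $T_\beta^{|\K_n|}(x)=\beta^{|\K_n|}(x-t_{\K_n}^{\J_n})$ into the duality identity and perform the change of variables $y=\beta^{|\K_n|}(x-t_{\K_n}^{\J_n})$. The right hand side collapses to
\begin{align*}
\int_0^1 \frac{B_s(y)}{\Vert B_s\Vert_{L^1([0,1])}}\, g(y)\d y = \int_0^1 \psi_{2s+1}(y)\, g(y)\d y,
\end{align*}
which is manifestly independent of $\J_n$. Since $g\in L^\infty([0,1])$ was arbitrary, the pointwise identity $\mathscr{P}^{|\K_n|}G_{s,\K_n}^{\J_n}=\psi_{2s+1}$ holds almost everywhere, as claimed.

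The main obstacle is the induction on the affine action of $T_\beta^{|\K_n|}$, and more precisely the verification that at every intermediate step the image of $I_{\K_n}^{\J_n}$ under an intermediate iterate of $T_\beta$ lands inside a single affine branch rather than straddling two of them. This is exactly where the layered labelling $k_i\in\{1,2\}$ with $0\le j_i\le a_{k_i}-1$ and the Parry constraint $a_1\le a_0$ are used. An alternative route would be to iterate the explicit formula \eqref{ks5} for $\mathscr{P}$, peeling off one entry of the tuple $\K_n$ at a time, but this appears to require the same bookkeeping without the conceptual clarity afforded by duality.
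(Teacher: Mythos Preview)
Your proof is correct and takes a genuinely different route from the paper. The paper (Appendix~\ref{sec:iterativeappandtransfop}) works directly with the explicit formula \eqref{ks5} for $\mathscr{P}$: by induction on $n$ it peels off the \emph{first} index $k_1$, showing that one application of $\mathscr{P}$ to $G_{s,1k_2\cdots k_{m+1}}^{\J_{m+1}}$ yields $G_{s,k_2\cdots k_{m+1}}^{j_2\cdots j_{m+1}}$, while for $k_1=2$ one application gives $G_{s,1k_2\cdots k_{m+1}}^{\J_{m+1}}$, reducing to the previous case. This is a pure operator-side computation, with all the characteristic-function bookkeeping done explicitly.

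Your approach instead passes to the Koopman side via \eqref{ks1}, reducing everything to the dynamical statement that $T_\beta^{|\K_n|}$ is a full affine branch on the interval $I_{\K_n}^{\J_n}$. Your induction then peels off the \emph{last} index $k_n$ (first $|\K_{n-1}|$ iterations by hypothesis, then the base case). This is conceptually cleaner: it identifies the $I_{\K_n}^{\J_n}$ as cylinder sets of the greedy $\beta$-expansion and exploits that $T_\beta$ acts on them exactly as the shift, so the role of the constraint $a_1\le a_0$ (ensuring the image after one step of the $k=2$ branch lands inside a single $k=1$ branch) is made transparent. The paper's computation achieves the same thing but hides this structure inside the sum over $j$ in \eqref{ks5}. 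A minor slip: in your base case for $k_1=2$ the index should be $j_1$ rather than $j_2$, but the argument is unaffected.
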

We refer the reader to \ref{sec:iterativeappandtransfop} for more details and the proof of this result. We will use it together with \eqref{hor13} and \eqref{hor2} in order to derive the following important lemma: 
\begin{lemma}\label{lemmaPk}
 In the $L^\infty$ norm, uniformly in $k>M+1$ we have: 
\begin{equation}\label{hor11}
\begin{aligned}
    &\big(\mathscr{P}^k F\big)(y) = \tilde{u}_1(y)\int_0^1 F(t) \d t \, + \tilde{u}_3(y)\, \frac{F(1)-F(0)}{4} \beta^{-k} \\
    &+ \mathcal{O}\bigg( \bigg(\frac{a_1}{\beta^2}\bigg)^{k-M} \Vert F\Vert_{C^N([0,1])}\bigg) + \mathcal{O}\big( \beta^{-k-M} \Vert F\Vert_{C^N([0,1])}\big) + \mathcal{O}\Big(\beta^{ k-MN} \big \Vert F\Vert_{C^N([0,1])} \Big).
\end{aligned} 
\end{equation}
\end{lemma}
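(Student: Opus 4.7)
The plan is to apply $\mathscr{P}^k$ termwise to the identity \eqref{hor13} and invoke Lemma~\ref{lemmacrux} to reduce every group $\mathscr{P}^k G_{s,\K_n}^{\J_n}$ to $\mathscr{P}^{k-|\K_n|}\psi_{2s+1}$ (permissible since $k>M+1\geq |\K_n|$). The decisive feature is that the right-hand side no longer depends on the label $(\K_n,\J_n)$, so the sum over an exponentially large partition collapses into a sum involving only a handful of universal profiles $\psi_1,\psi_3,\ldots,\psi_{2N+1}$, each evaluated under one of the two nearly-constant powers $\mathscr{P}^{k-M}$ or $\mathscr{P}^{k-M-1}$.

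I would then extract the two leading contributions via an eigendecomposition of $\mathscr{P}$ on the finite-dimensional invariant subspace spanned by the relevant $\psi_j$'s. For $s=0$, adding the first two lines of \eqref{hc20} gives $\psi_1=\tilde u_1+\tilde u_2$, so by \eqref{hor5}
\[\mathscr{P}^{k-|\K_n|}\psi_1=\tilde u_1+\Bigl(-\tfrac{a_1}{\beta^2}\Bigr)^{k-|\K_n|}\tilde u_2.\]
The $\tilde u_1$-contribution telescopes across the partition via $\sum_{\K_n,\J_n}\int_{I_{\K_n,\J_n}}F\d t=\int_0^1 F\d t$ to produce the main term $\tilde u_1(y)\int_0^1 F\d t$, while the $\tilde u_2$-contribution is trivially bounded by $(a_1/\beta^2)^{k-M}\|\tilde u_2\|_\infty \|F\|_\infty$, matching the first stated error. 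For $s=1$ I would decompose $\psi_3$ in the eigenbasis of $\mathscr{P}$ restricted to $\mathrm{span}\{\psi_1,\psi_2,\psi_3,\psi_4\}$; since $\int_0^1 \psi_3 \d x=0$ the $\tilde u_1$-component vanishes, and the $\tilde u_3$-coefficient is the dominant surviving piece. The key algebraic cancellation $\beta^{-|\K_n|}\cdot \beta^{-(k-|\K_n|)}=\beta^{-k}$ then removes all $|\K_n|$-dependence from that coefficient, allowing the straightforward telescoping $\sum (F(t_{\K_n}^{\J_{n-1},j_n+1})-F(t_{\K_n}^{\J_n}))=F(1)-F(0)$; combined with the factor $\|B_1\|_{L^1}=1/4$ from Definition~\ref{Giz}, a direct computation of the $\tilde u_3$-coefficient of $\psi_3$ verifies that this piece equals exactly $\tilde u_3(y)(F(1)-F(0))\beta^{-k}$.

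For the error estimates I would address three remaining sources. The subleading $\tilde u_2$ and fourth-eigenvector contributions at $s=1$ carry an extra factor $\beta^{-M}$ compared to the other errors and are absorbed. For $s\geq 2$, using $\int_0^1\psi_{2s+1}\d x=0$ (so that $\tilde u_1$ drops out) together with the spectral bound $\|\mathscr{P}^{j}\psi_{2s+1}\|_{L^\infty}\leq C\beta^{-j}$ --- valid because every remaining eigenvalue has modulus at most $\beta^{-1}$ (in particular $a_1/\beta^2<\beta^{-1}$, since $a_1\leq a_0<\beta$) --- and the pointwise bound $|\Delta F^{(s-1)}|\leq \beta^{-|\K_n|}\|F^{(s)}\|_\infty$, each $s$-th total is of order $\beta^{M-Ms-k}\|F\|_{C^N}$ after summing over the $\mathcal{O}(\beta^M)$ partition intervals; the worst case $s=2$ produces the advertised $\mathcal{O}(\beta^{-k-M}\|F\|_{C^N})$. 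Finally, the Taylor-type remainder of \eqref{hor13} is $\mathcal{O}(\beta^{-MN}\|F\|_{C^N})$ in $L^\infty$, and pushing it through $\mathscr{P}^k$ with the crude operator-norm bound $\|\mathscr{P}\|_{L^\infty\to L^\infty}\leq (a_0+1)/\beta\leq \beta$ (immediate from $a_0+1\leq a_0\beta+a_1=\beta^2$) yields the last error $\mathcal{O}(\beta^{k-MN}\|F\|_{C^N})$.

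The main obstacle will be justifying the spectral bound $\|\mathscr{P}^{j}\psi_{2s+1}\|_{L^\infty}\leq C\beta^{-j}$ uniformly in $s\geq 1$. The case $s=1$ is explicit from \eqref{hor5} and \eqref{hc20}; for larger $s$ one has to understand how $\mathscr{P}$ acts on each pair $\{\psi_{2s+1},\psi_{2(s+1)}\}$, verify that it preserves (up to an inductively controlled lower-order correction) a finite-dimensional subspace, and show that within this subspace the spectrum outside the $\tilde u_1$-direction lies in the disk $\{|\lambda|\leq \beta^{-1}\}$. Once that spectral input is secured, the remainder of the argument is bookkeeping: assembling the telescoping sums, the volume count $\mathcal{O}(\beta^M)$ of partition intervals, and the crude operator-norm bounds collected above.
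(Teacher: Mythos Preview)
Your proposal is correct and follows essentially the same strategy as the paper: apply $\mathscr{P}^k$ to \eqref{hor13}, use Lemma~\ref{lemmacrux} to reduce each $\mathscr{P}^k G_{s,\K_n}^{\J_n}$ to $\mathscr{P}^{k-|\K_n|}\psi_{2s+1}$, and then extract the leading terms via the eigendecomposition of $\mathscr{P}$ on the finite-dimensional invariant subspace (the paper packages this as \eqref{hor2}, derived from the block-triangular analysis in Appendix~\ref{sec:restrictionmatrixandproject}). The ``main obstacle'' you flag --- the spectral bound $\|\mathscr{P}^j\psi_{2s+1}\|_{L^\infty}\leq C\beta^{-j}$ for $s\geq 2$ --- is exactly the content of that appendix, obtained precisely via the pairwise $\{\psi_{2s+1},\psi_{2(s+1)}\}$ block structure you anticipate; with that in hand, the telescoping, the partition count $\mathcal{O}(\beta^M)$, and the crude bound $\|\mathscr{P}\|_{L^\infty\to L^\infty}\leq\beta$ you describe match the paper's proof line by line.
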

  The proof of this lemma is postponed to \ref{sec:proofoflemmaasymptformulapk}. There are two main ideas behind this proof. The first one is that for every fixed $\nu\in \N$, the functions $\psi_j$ with $1\leq j\leq 2\nu$ form an invariant $2\nu$-dimensional subspace for $\mathscr{P}$, and one can analyze in quite some detail its restriction to this subspace (which will be done in \ref{sec:restrictionmatrixandproject}), i.e. its eigenvalues and corresponding eigenfunctions. The second main idea is that by applying sufficiently many times $\mathscr{P}$ to the ``building blocks'' $G_{s,\K_n}^{\J_n}$ from \eqref{hor13}, we eventually end up in an invariant subspace, as shown in Lemma \ref{lemmacrux}.

To finish the proof of Theorem~\ref{thm1}, we are only left with showing the existence of $\epsilon$ in \eqref{hor6}, which amounts to bound the three error terms appearing in \eqref{hor11}. 
Choosing $M \coloneqq \lfloor \frac{3k}{N}\rfloor$, then both $\beta^{-k-M}\sim \big (\beta^{-1-3/N})^k$ and
$\beta^{k-MN} \sim \beta^{-2 k}$
go faster to zero than $\beta^{-k}$ as $k \to \infty$. Furthermore, we have
\begin{align*}
    \bigg( \frac{a_1}{\beta^2}\bigg)^{k-M} \sim \beta^{-k} \Big (\Big( \frac{a_1}{\beta}\Big)^{1-3/N}\beta^{3/N} \Big )^k=\beta^{-k} \Big (\Big( \frac{a_1}{\beta}\Big)\Big( \frac{\beta^2}{a_1}\Big)^{3/N}\Big )^k,
\end{align*}
which goes to zero faster than $\beta^{-k}$ if $N$ is chosen as in the theorem. Finally, our choice for $\epsilon$ is:
\begin{equation}\label{epsil}
\epsilon:= \min \Big \{ \frac{3}{N}\, ,\, \frac{-(3/N)\ln (\beta^2/a_1)+\ln(\beta/a_1) }{\ln(\beta)} \Big \}.
\end{equation}
\qed

\appendix

\section{Various properties of the transfer operator}
\subsection{The Euler-Bernoulli approximation formula revisited}\label{sec:eulerbernoulliformula}

Here we will prove \eqref{june17}, see also Formula 23.1.32 in \cite{AS}. Our starting point is the inversion formula for Fourier series which under our regularity condition $f\in C^N([0,1])$ with $N\geq 1$ it holds for all $x\in (0,1)$: 
\[
f(x)=\int_0^1 f(t) \d t +\sum_{m\neq 0} e^{2\pi i mx}f_m,\quad f_m=\int_0^1 e^{-2\pi i mt} f(t) \d t.
\] 
By integration by parts for $m\neq 0$ we have: 
\begin{align*}
f_m &=(-2\pi i m)^{-1}\int_0^1 f(t) \Big (e^{-2\pi i mt}\Big )' \d t\\
&=(-2\pi i m)^{-1} \big (f(1)-f(0)\big )+(2\pi i m)^{-1}\int_0^1 f'(t)\,  e^{-2\pi i mt} \d t\\
&=-\sum_{s=1}^N (2\pi i m)^{-s}\big (f^{(s-1)}(1)-f^{(s-1)}(0)\big )+(2\pi i m)^{-N}\int_0^1 f^{(N)}(t) \, e^{-2\pi i mt} \d t.
\end{align*}
Introducing this expression back into the Fourier series and using the definition of the periodized Bernoulli polynomials we obtain \eqref{june17}. We note that the series $-\sum_{m\neq 0} (2\pi i m)^{-1} e^{2\pi i m x}$ converges uniformly to $B_1(x)=x-1/2$ on any interval $[\epsilon,1-\epsilon]$ with $0<\epsilon<1/2$. 

Another elementary proof, not using Fourier theory, is as follows: consider the  periodized Bernoulli polynomial $\widetilde{B}_1(x-u)$ with a fixed $x\in (0,1)$ given by
\[
\widetilde{B}_1(x-u)=\left \{\begin{matrix}
    x-u-1/2, & 0\leq u\leq x, \\
    x-u+1/2, & x<u\leq 1,
\end{matrix}\right . 
\]
hence 
\begin{align*}
\int_0^1 f'(u) \widetilde{B}_1(x-u) \d u &= \int_0^x f'(u) (x-u-1/2) \d u+\int_x^1 f'(u) (x-u+1/2) \d u\\
&=-f(x)+\int_0^1 f(u)\d u +\big (f(1)-f(0)\big ) \widetilde{B}_1(x).
\end{align*}
Now we use the recursion formula $\partial_u \widetilde{B}_n(x-u)=-n \widetilde{B}_{n-1}(x-u)$ for the Bernoulli polynomials if $u\neq x$, together with integration by parts and the fact that $\widetilde{B}_n$ is everywhere continuous if $n\geq 2$. 

We can also rescale \eqref{june17} as follows.  
Assume that $F \colon [a,b]\to \R$ belongs to  $C^N([a,b])$ and define $f(x)=F\big (a(1-x)+bx\big )$ with $0\leq x\leq 1$. Then for all $a<y<b$ we have 
\begin{equation}\label{BAF}
    \begin{aligned}
     F(y)&=   (b-a)^{-1}\int_a^b F(u) \d u +\sum_{s=1}^N (b-a)^{s-1}\big ( F^{(s-1)}(b)-F^{(s-1)}(a)\big ) \frac{\widetilde{B}_s\big( \frac{y-a}{b-a}\big)}{s!} \\
     &\qquad -(b-a)^N\int_{0}^{1}\, F^{(N)}\big (a(1-u)+bu\big )\, \frac{\widetilde{B}_N\big (\frac{y-a}{b-a}-u \big )}{N!} \d u. 
    \end{aligned}
\end{equation}
\subsection{The Bernoulli polynomials are eigenfunctions in the integer case} \label{lem:lemA2}
First, we immediately have that $\T \widetilde{B}_0=\widetilde{B}_0$ and also: 
\begin{align*}
\big (\T \widetilde{B}_1\big )(x)=x/q+q^{-1}(q-1)/2 -1/2=q^{-1}(x-1/2)=q^{-1}\widetilde{B}_1(x),\quad 0\leq x<1.
\end{align*}

Second, if $n\geq 2$ we have:
\begin{equation}\label{hc1}
\begin{aligned}
    (\T \widetilde{B}_n)(x)&=-n! \sum_{m\neq 0}  (2\pi i m)^{-n} \frac{1}{q}\sum_{j=0}^{q-1}e^{2\pi i m \big (\frac{x+j}{q}\big )}
    \\&=-n!\sum_{m\neq 0}  (2\pi i m)^{-n}e^{\frac{2\pi i m  x}{q}}\frac{1}{q} \sum_{j=0}^{q-1}\big (e^{\frac{2\pi i m }{q}}\big )^j\\
    &=-n!\sum_{k\neq 0}  (2\pi i kq)^{-n} e^{2\pi i k x} \\
    &=q^{-n}\widetilde{B}_n(x),
\end{aligned}
\end{equation}
where we used that if $m/q\not\in \mathbb{Z}$ then $\sum_{j=0}^{q-1}\big (e^{\frac{2\pi i m }{q}}\big )^j=0$.

\subsection{The expression of the transfer operator when the base is not an integer}
\begin{lemma}\label{lemmaA1}
    If $\beta$ satisfies \eqref{eq:fundamentaleqmoregeneral}, then the transfer operator is given by \eqref{ks5}.
\end{lemma}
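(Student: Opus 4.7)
The plan is to verify \eqref{ks5} directly from the duality definition \eqref{ks1} by explicitly computing $\int_0^1 f(x)\,g(T_\beta x)\d x$ via a change of variables on the natural partition of $[0,1)$ induced by the level sets of $x\mapsto \lfloor\beta x\rfloor$.

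First I would describe this partition. Since $a_0<\beta<a_0+1$ by \eqref{eq:a0betaa0plus1}, for $x\in[0,1)$ one has $\beta x\in[0,\beta)$ and hence $\lfloor\beta x\rfloor\in\{0,1,\dots,a_0\}$. Setting $I_j:=\{x\in[0,1):\lfloor\beta x\rfloor=j\}$, this gives
\[
I_j=\bigl[j\beta^{-1},(j+1)\beta^{-1}\bigr),\ 0\leq j\leq a_0-1,\qquad I_{a_0}=\bigl[a_0\beta^{-1},1\bigr),
\]
where the length of $I_{a_0}$ equals $1-a_0\beta^{-1}=a_1\beta^{-2}$ by \eqref{eq:fundamentaleqmoregeneral}. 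On each $I_j$ the map $T_\beta$ is the affine bijection $x\mapsto \beta x-j$, which sends $I_j$ onto $[0,1)$ for $0\leq j\leq a_0-1$ and sends $I_{a_0}$ onto $[0,a_1\beta^{-1})$.

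Second, I would split the integral and change variables on each piece. For $g\in L^\infty([0,1])$ and $f\in L^1([0,1])$, substituting $y=\beta x-j$ (so $\d x=\beta^{-1}\d y$) on each $I_j$ gives
\begin{align*}
\int_0^1 f(x)g(T_\beta x)\d x
&=\sum_{j=0}^{a_0-1}\int_{I_j} f(x)g(\beta x-j)\d x+\int_{I_{a_0}} f(x)g(\beta x-a_0)\d x\\
&=\frac{1}{\beta}\sum_{j=0}^{a_0-1}\int_0^1 f\!\left(\tfrac{y+j}{\beta}\right)g(y)\d y+\frac{1}{\beta}\int_0^{a_1\beta^{-1}} f\!\left(\tfrac{y+a_0}{\beta}\right)g(y)\d y.
\end{align*}
Rewriting the last integral with the characteristic function $\chi_{[0,a_1\beta^{-1}]}$, the right-hand side becomes $\int_0^1 (\mathscr{P}f)(y)\,g(y)\d y$, where $\mathscr{P}f$ is exactly the expression claimed in \eqref{ks5}. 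Since $g$ was arbitrary and the duality \eqref{ks1} characterises $P_\beta$ uniquely, this identifies $P_\beta\equiv \mathscr{P}$.

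There is no genuine obstacle here; the only care needed is the correct identification of the image $T_\beta(I_{a_0})=[0,a_1\beta^{-1})$, which is what produces the characteristic-function term and distinguishes the Parry case from the integer case treated earlier in the paper. The endpoints play no role as we work in $L^1$/$L^\infty$.
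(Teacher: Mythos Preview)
Your proof is correct and follows essentially the same approach as the paper: split the duality integral according to the values of $\lfloor\beta x\rfloor$, change variables $y=\beta x-j$ on each piece, and use $\beta-a_0=a_1\beta^{-1}$ to identify the image of the last piece as $[0,a_1\beta^{-1})$. Your write-up is a bit more explicit about the partition and the images of the branches, but there is no substantive difference.
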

\begin{proof}
    Using $(K_\beta g)(x)=g(\beta x-\lfloor \beta x\rfloor)$ with $g\in L^\infty([0,1])$ we have for every $f\in L^1([0,1])$ that
    \begin{align*}
     \int_0^1 f(x) (K_\beta g)(x)\d x &=\sum_{j=0}^{a_0-1}\int_{\frac{j}{\beta}}^{\frac{j+1}{\beta}}f(x) g(\beta x-j)\d x 
        +\int_{\frac{a_0}{\beta}}^1 f(x) g(\beta x -a_0)\d x\\
        &=\beta^{-1}\sum_{j=0}^{a_0-1}\int_{0}^1 f\Big (\frac{x+j}{\beta}\Big ) g(x)\d x +\beta^{-1}\int_{0}^{\frac{a_1}{\beta}} f\Big (\frac{x+a_0}{\beta}\Big ) g(x)\d x \\
        & =\int_0^1 (\mathscr{P} f)(x) g(x)\d x, 
    \end{align*}
    
    where in the second equality we have used that $\beta-a_0=a_1\beta^{-1}$ and also, that the last integral in the same equality can be extended to the whole interval $[0,1]$ because $f\big (\frac{x+a_0}{\beta}\big )=0$ when $\frac{a_1}{\beta}<x$.
\end{proof}
The following lemma gives a few additional properties of the transfer operator $\mathscr{P}$.
\begin{lemma}\label{lem:propertiesofmathscrP}
Let  $f \in L^\infty([0,1])$. Then it follows that
    \begin{enumerate}
    \item $\vert (\mathscr{P}f)(x)\vert \leq (\mathscr{P}\vert f \vert)(x)$.
        \item  $\Vert \mathscr{P}f\Vert_{L^\infty([0,1])} \leq \frac{a_0+1}{\beta}\Vert f \Vert_{L^\infty([0,1])}<\beta \, \Vert f \Vert_{L^\infty([0,1])}$.
        \item $\int_0^1 (\mathscr{P}f)(x) \d x = \int_0^1 f(x) \d x$.
    \end{enumerate}
\end{lemma}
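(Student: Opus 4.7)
All three statements follow directly from the explicit formula \eqref{ks5} for $\mathscr{P}$, so the proof will be essentially a computation. I would treat the three items in the order (i), (iii), (ii), since (ii) depends on both the algebraic relation \eqref{eq:fundamentaleqmoregeneral} and an inequality for $\beta$ that is a little more delicate than the others.

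For (i), the formula \eqref{ks5} writes $(\mathscr{P}f)(x)$ as a sum of terms, each of which is a non-negative coefficient times $f$ evaluated at some point in $[0,1]$. Applying the triangle inequality under the sum and bringing $|\cdot|$ inside each term gives exactly $(\mathscr{P}|f|)(x)$; the indicator factor $\chi_{[0,a_1\beta^{-1}]}$ is $\geq 0$ and therefore causes no issue.

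For (iii), I would either invoke the duality definition \eqref{ks1} with $g\equiv 1$ (noting that $K_\beta 1 = 1$), or proceed directly: substituting $y=(x+j)/\beta$ in the first $a_0$ integrals turns them into $\int_{j/\beta}^{(j+1)/\beta} f(y)\,\mathrm{d}y$, while the same substitution in the last integral gives $\int_{a_0/\beta}^{(a_0+a_1\beta^{-1})/\beta}f(y)\,\mathrm{d}y$; the upper endpoint equals $a_0/\beta + a_1/\beta^2 = 1$ thanks to \eqref{eq:fundamentaleqmoregeneral}, and the $a_0+1$ sub-intervals partition $[0,1]$.

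For (ii), the pointwise bound from (i) together with $\|f\|_\infty$ in each term of \eqref{ks5} gives
\[
|(\mathscr{P}f)(x)|\leq \frac{a_0}{\beta}\|f\|_\infty + \frac{1}{\beta}\|f\|_\infty\,\chi_{[0,a_1\beta^{-1}]}(x)\leq \frac{a_0+1}{\beta}\|f\|_\infty,
\]
and the maximum $(a_0+1)/\beta$ is indeed attained on $[0,a_1\beta^{-1}]$. The only real content is the strict inequality $(a_0+1)/\beta<\beta$, i.e.\ $\beta^2>a_0+1$; using the defining relation $\beta^2=a_0\beta+a_1$, together with $\beta>1$ and $a_1\geq 1$, yields $\beta^2=a_0\beta+a_1>a_0+a_1\geq a_0+1$, which closes the argument.

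There is no genuine obstacle here; the one place to be careful is the endpoint bookkeeping in the change of variables for (iii) (making sure the union of the shifted intervals exactly tiles $[0,1]$, which relies on \eqref{eq:fundamentaleqmoregeneral}) and remembering to invoke $a_1\geq 1$ and $\beta>1$ in the strict estimate of (ii).
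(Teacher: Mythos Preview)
Your proof is correct and matches the paper's approach exactly; the paper simply says ``Straightforward computations'' and singles out the same chain $\beta^2=a_0\beta+a_1>a_0+a_1\geq a_0+1$ for the strict inequality in (ii). Your write-up is just a more detailed version of that.
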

\begin{proof} Straightforward computations, except maybe for the inequality $a_0+1<\beta^2$, which follows from: 
\[ \beta^2=a_0\beta +a_1>a_0+a_1\geq a_0+1.\]
\end{proof}

\section{Proof of Lemma~\ref{lemmacrux}}\label{sec:iterativeappandtransfop}
%
%
%
We recall the splitting into smaller intervals of the interval $[0,1]$ illustrated in Figure~\ref{fig:splitting}. With the simplified notation from \eqref{hor7}
we see that the general formula for the splitting can be written as
\begin{align*}
    t_{\K_n}^{\J_n} = t_{\K_{n-1}}^{\J_{n-1}} + \beta^{-\vert \K_{n-1} \vert} t_{k_n}^{j_n}.
\end{align*}
We have the following useful result:
\begin{lemma}\label{lem:splittingofpoints0to1}
Let $n \in \mathbb{N}$, then 
    \begin{align*}
        t_{1k_2\cdots k_n}^{\J_n} &= \frac{j_1}{\beta} + \beta^{-1} t_{k_2\cdots k_n }^{j_2 \cdots j_n}, \\
        t_{2k_2\cdots k_n}^{\J_n} &= \frac{a_0}{\beta} + \beta^{-1} t_{1k_2\cdots k_n }^{\J_n}
    \end{align*}
\end{lemma}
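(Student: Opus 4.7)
The plan is to unfold the defining recursion $t_{\K_n}^{\J_n} = t_{\K_{n-1}}^{\J_{n-1}} + \beta^{-|\K_{n-1}|} t_{k_n}^{j_n}$ into a single explicit sum, and then read off both identities of the lemma directly from that sum. The observation driving the proof is that the recursion as stated peels off the \emph{last} index, whereas the lemma re-expresses the point in a form that peels off the \emph{first} index; the cleanest way to reconcile the two is via the closed form.

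First I would prove, by induction on $n\geq 1$, the closed-form identity
\[
t_{\K_n}^{\J_n} \;=\; \sum_{i=1}^{n} \beta^{-(k_1+k_2+\cdots+k_{i-1})}\, t_{k_i}^{j_i},
\]
where the empty sum at $i=1$ is interpreted as $0$, so the first term is $t_{k_1}^{j_1}$. The base case $n=1$ is trivial and the inductive step is immediate upon applying the given recursion and invoking the inductive hypothesis on the first $n-1$ indices. The first identity of the lemma then drops out by setting $k_1=1$, using $t_1^{j_1}=j_1/\beta$, and factoring $\beta^{-1}$ from the tail $\sum_{i=2}^{n}$; the remaining sum is precisely the closed form for $t_{k_2\cdots k_n}^{j_2\cdots j_n}$.

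For the second identity I would combine two applications of the closed form. With $k_1=2$ and $t_2^{j_1}=a_0/\beta + j_1/\beta^2$, the closed form gives
\[
t_{2k_2\cdots k_n}^{\J_n} \;=\; \frac{a_0}{\beta} + \frac{j_1}{\beta^2} + \beta^{-2}\, t_{k_2\cdots k_n}^{j_2\cdots j_n}.
\]
Multiplying the first identity by $\beta^{-1}$ yields $\beta^{-1}t_{1k_2\cdots k_n}^{\J_n} = j_1/\beta^2 + \beta^{-2}\, t_{k_2\cdots k_n}^{j_2\cdots j_n}$, so adding $a_0/\beta$ reproduces the previous display, as required. There is no real obstacle here: the lemma is a pure bookkeeping identity about the tree of splittings, and the only care needed is to correctly track the exponent $|\K_{n-1}|=k_1+\cdots+k_{n-1}$ in the induction and to respect the empty-sum convention in the base case.
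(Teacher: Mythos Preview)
Your proposal is correct and is essentially the same argument as the paper's: both unfold the recursion $t_{\K_n}^{\J_n}=t_{\K_{n-1}}^{\J_{n-1}}+\beta^{-|\K_{n-1}|}t_{k_n}^{j_n}$ into the explicit sum $\sum_{i=1}^{n}\beta^{-(k_1+\cdots+k_{i-1})}t_{k_i}^{j_i}$ and then regroup, the only cosmetic difference being that you isolate this closed form as a separate inductive step while the paper writes out the chain of equalities directly.
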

\begin{proof}
We have
\begin{align*}
    t_{1k_2\cdots k_{n}}^{\J_n} &= t_{1k_2\cdots k_{n-1}}^{\J_{n-1}}+\beta^{-1-k_2-\cdots - k_{n-1}}t_{k_{n}}^{j_{n}} \\
        &= j_1\beta^{-1} + \beta^{-1}t_{k_2}^{j_2} + \beta^{-1 - k_2} t_{k_3}^{j_3} +\cdots + \beta^{-1 - k_2 - \cdots - k_{n-1}}t_{k_{n}}^{j_{n}} \\
        &= j_1\beta^{-1} + \beta^{-1} t_{k_2\cdots k_{n}}^{j_2 \cdots j_n}
        \end{align*}
        and 
        \begin{align*}
    t_{2k_2\cdots k_n}^{\J_n} &= t_{2k_2\cdots k_{n-1}}^{\J_{n-1}}+\beta^{-2-k_2-\cdots - k_{n-1}}t_{k_{n}}^{j_{n}} \\
        &= a_0\beta^{-1}+ j_1\beta^{-2} + \beta^{-2}t_{k_2}^{j_2} + \beta^{-2 - k_2} t_{k_3}^{j_3} +\cdots + \beta^{-2 - k_2 - \cdots - k_{n-1}}t_{k_{n}}^{j_{n}} \\
        &= a_0\beta^{-1} + \beta^{-1} t_{1k_2\cdots k_{n}}^{\J_n}.
        \end{align*}
\end{proof}
Note that by a change of variable $u = \beta^{\vert \K_n\vert }\Big( x - t_{\K_n}^{\J_n}\Big)$, it follows that the functions introduced in Definition \ref{Giz} satisfy:
\begin{align*}
        \Big\Vert G_{s,\K_n}^{\J_n} \Big\Vert_{L^1([0,1])} = 1.
    \end{align*}
We now have all the ingredients we need to prove Lemma~\ref{lemmacrux}, which will be done by induction in $n$. For the basis step, let $n=1$. First, let $k_1=1$, then for $0 \leq j_1 \leq a_0-1$:
\begin{align*}
    G_{s,1}^{j_1}(x) &= \frac{\beta}{\Vert B_s \Vert_{L^1([0,1])}} B_s \Big(\beta\big(x - t_1^{j_1}\big)\Big) \chi_{\big[t_1^{j_1},t_1^{j_1+1} \big]}(x) \\
    &= \frac{\beta}{\Vert B_s \Vert_{L^1([0,1])}} B_s \bigg(\beta\bigg(x - \frac{j_1}{\beta}\bigg)\bigg) \chi_{\big[j_1\beta^{-1},(j_1+1)\beta^{-1} \big]}(x).
\end{align*}
 Thus by applying the operator $\mathscr{P}$, we get for a.e. $x \in [0,1]$:
\begin{align*}
    \Big(\mathscr{P}G_{s,1}^{j_1}\Big)(x) &= \frac{1}{\beta}\sum_{j=0}^{a_0} G_{s,1}^{j_1}\bigg( \frac{x+j}{\beta}\bigg)  \\
    &= \frac{1}{\beta}\sum_{j=0}^{a_0} \frac{\beta}{\Vert B_s \Vert_{L^1([0,1])}} B_s \bigg(\beta\bigg(\frac{x+j}{\beta} - \frac{j_1}{\beta}\bigg)\bigg) \chi_{\big[j_1\beta^{-1},(j_1+1)\beta^{-1} \big]}\bigg( \frac{x+j}{\beta}\bigg) \\
    &= \sum_{j=0}^{a_0} \frac{1}{\Vert B_s \Vert_{L^1([0,1])}} B_s \big(x+j -j_1\big) \chi_{[j_1-j,j_1-j+1 ]}(x)\chi_{[0,1]}(x) \, .
\end{align*}
Since $0 \leq j_1 \leq a_0-1$, the only term in the sum which is different from zero a.e. is the one with $j=j_1$, hence
    \begin{align}
        \Big(\mathscr{P}G_{s,1}^{j_1}\Big)(x) 
        &= \frac{B_s(x)}{\Vert B_s\Vert_{L^1([0,1])}}\chi_{[0,1]}(x)=\psi_{2s+1}(x). \label{eq:mathscrPGnequal1}
    \end{align}
    Next, let $k_1=2$. Then for $0\leq j_1 \leq a_1-1$: 
    \begin{align*}
        G^{j_1}_{s,2} (x)&= \frac{\beta^2}{\Vert B_s \Vert_{L^1([0,1])}}B_s\Big( \beta^2 \big( x - t_2^{j_1} \big) \Big)\chi_{\big[t_2^{j_1},t_2^{j_1+1} \big]}(x) \\
        &=\frac{\beta^2}{\Vert B_s \Vert_{L^1([0,1])}}B_s\bigg( \beta^2 \bigg( x - \frac{a_0}{\beta} - \frac{j_1}{\beta^2} \bigg) \bigg)\chi_{[a_0\beta^{-1} + j_1\beta^{-2},a_0\beta^{-1}+(j_1+1)\beta^{-2}]}(x)
    \end{align*}
and 
\begin{align*}
    & \Big(\mathscr{P}G_{s,2}^{j_1}\Big)(x) = \frac{1}{\beta}\sum_{j=0}^{a_0} G_{s,2}^{j_1}\bigg( \frac{x+j}{\beta}\bigg) \\
    &= \frac{1}{\beta}\sum_{j=0}^{a_0} \frac{\beta^2}{\Vert B_s \Vert_{L^1([0,1])}} B_s \bigg(\beta^2\bigg(\frac{x+j}{\beta} - \frac{a_0}{\beta} -  \frac{j_1}{\beta^2}\bigg)\bigg) \chi_{[a_0\beta^{-1} + j_1\beta^{-2},a_0\beta^{-1}+(j_1+1)\beta^{-2}]}\bigg( \frac{x+j}{\beta}\bigg) \\
    &= \sum_{j=0}^{a_0} \frac{\beta}{\Vert B_s \Vert_{L^1([0,1])}} B_s \bigg(\beta\bigg(x+j- a_0 -  \frac{j_1}{\beta} \bigg)\bigg) \chi_{[a_0 + j_1\beta^{-1}-j,a_0+(j_1+1)\beta^{-1}-j]}(x)\chi_{[0,1]}(x) \, .
\end{align*}
Only the term with $j=a_0$ is not zero almost-everywhere and thus we get
\begin{align*}
    \Big(\mathscr{P}G_{s,2}^{j_1}\Big)(x) 
    &= \frac{\beta}{\Vert B_s \Vert_{L^1([0,1])}} B_s \Big( \beta \big(x - t_1^{j_1}\big)\Big) \chi_{\big[ t_1^{j_1},t_1^{j_1+1}\big]}(x) = G_{s,1}^{j_1}(x).
\end{align*}
Another application of $\mathscr{P}$ and using \eqref{eq:mathscrPGnequal1} give
\begin{align*}
    \Big(\mathscr{P}^2G_{s,2}^{j_1}\Big)(x) = \Big(\mathscr{P}G_{s,1}^{j_1}\Big)(x) = \psi_{2s+1}(x),
\end{align*}
which finishes the basis step of our induction proof. For the induction step, assume that the equality is true for $n=m$ and let $n=m+1$. 

We first consider the case when $k_1=1$. By Lemma~\ref{lem:splittingofpoints0to1} we may write
\begin{align*}
        &G_{s,1k_2\cdots k_{m+1}}^{\J_{m+1}} (x) = \frac{\beta^{1+k_2+\cdots + k_{m+1}}}{\Vert B_s \Vert_{L^1([0,1])}}B_s \Big( \beta^{1+k_2+\cdots+k_{m+1}} \Big(x-t_{1k_2\cdots k_{m+1}}^{\J_{m+1}}\Big)\Big) \chi_{\big[t_{1k_2\cdots k_{m+1}}^{\J_{m+1}},t_{1k_2 \cdots k_{m+1}}^{\J_m,j_{m+1}+1}\big]}(x) \\
        &= \frac{\beta^{1+k_2+\cdots+ k_{m+1}}}{\Vert B_s \Vert_{L^1([0,1])}}B_s \bigg( \beta^{1+k_2+\cdots+k_{m+1}} \bigg(x-\frac{j_1}{\beta }-\beta^{-1}t_{k_2\cdots k_{m+1}}^{j_2\cdots j_{m+1}}\bigg)\bigg) \\
        &\qquad \cdot \chi_{\big[j_1\beta^{-1} + \beta^{-1}t_{k_2\cdots k_{m+1}}^{j_2 \cdots j_{m+1}},j_1\beta^{-1}+\beta^{-1}t_{k_2 \cdots k_{m+1}}^{j_2\ldots j_{m}(j_{m+1}+1)}\big]}(x),
    \end{align*}
    for $0 \leq j_1 \leq a_0-1$. Thus for a.e. $x \in [0,1]$ we have 
    \begin{align*}
        &\Big( \mathscr{P}^{1 + k_2 + \cdots + k_{m+1}} G_{s,1k_2\cdots k_{m+1}}^{\J_{m+1}} \Big)(x) = \Big( \mathscr{P}^{k_2 + \cdots + k_{m+1}} \Big( \mathscr{P} G_{s,1k_2\cdots k_{m+1}}^{\J_{m+1}} \Big)\Big)(x) \\
        &= \mathscr{P}^{k_2+\cdots +k_{m+1}}\bigg(\frac{1}{\beta}\sum_{j=0}^{a_0} G_{s,1k_2\cdots k_{m+1}}^{\J_{m+1}}\bigg( \frac{x+j}{\beta}\bigg)\chi_{[0,1]}(x) \bigg) \\
        &= \Big(\mathscr{P}^{k_2+\cdots +k_{m+1}}G_{s,k_2\cdots k_{m+1}}^{j_2\cdots j_{m+1}}\Big)(x) \\
        &=\psi_{2s+1}(x),
    \end{align*}
    where in the third equality we have used that the only term in the sum which is not zero a.e. has $j=j_1$. Furthermore, in the last equality we have used the induction hypothesis.

    Next let $k_1=2$. By Lemma~\ref{lem:splittingofpoints0to1} we may write
    \begin{align*}
        &G_{s,2k_2\cdots k_{m+1}}^{\J_{m+1}} (x) = \frac{\beta^{2+k_2+\cdots + k_{m+1}}}{\Vert B_s \Vert_{L^1([0,1])}}B_s \Big( \beta^{2+k_2+\cdots+k_{m+1}} \Big(x-t_{2k_2\cdots k_{m+1}}^{\J_{m+1}}\Big)\Big) \chi_{\big[t_{2k_2\cdots k_{m+1}}^{\J_{m+1}},t_{2k_2 \cdots k_{m+1}}^{\J_m,j_{m+1}+1}\big]}(x) \\
        &= \frac{\beta^{2+k_2+\cdots + k_{m+1}}}{\Vert B_s \Vert_{L^1([0,1])}}B_s \bigg( \beta^{2+k_2+\cdots+k_{m+1}} \bigg(x-\frac{a_0}{\beta}-\beta^{-1}t_{1k_2\cdots k_{m+1}}^{\J_{m+1}}\bigg)\bigg) \\
        &\qquad \cdot \chi_{\big[a_0\beta^{-1} + \beta^{-1}t_{1k_2\cdots k_{m+1}}^{\J_{m+1}},a_0\beta^{-1}+\beta^{-1}t_{1k_2 \cdots k_{m+1}}^{\J_m,j_{m+1}+1}\big]}(x),
        \end{align*}
        for $0 \leq j_1 \leq a_1-1$. Thus for a.e. $x \in [0,1]$ we have
        \begin{align*}
        &\Big( \mathscr{P}^{2 + k_2 + \cdots + k_{m+1}} G_{s,2k_2\cdots k_{m+1}}^{\J_{m+1}} \Big)(x) = \Big( \mathscr{P}^{1+k_2 + \cdots + k_{m+1}} \Big( \mathscr{P} G_{s,2k_2\cdots k_{m+1}}^{\J_{m+1}} \Big)\Big)(x) \\
        &= \mathscr{P}^{1+k_2+\cdots +k_{m+1}}\bigg(\frac{1}{\beta}\sum_{j=0}^{a_0} G_{s,2k_2\cdots k_{m+1}}^{\J_{m+1}}\bigg( \frac{x+j}{\beta}\bigg)\chi_{[0,1]}(x) \bigg )\\
        &= \Big(\mathscr{P}^{1+k_2+\cdots +k_{m+1}}G_{s,1k_2\cdots k_{m+1}}^{\J_{m+1}}\Big)(x) \\
        &=\psi_{2s+1}(x),
    \end{align*}
    where in the third equality, we again have used that the only non-zero term a.e. has $j=a_0$, and in the fourth equality we have used the already proved case with $k_1=1$.
\qed
\section{Invariant subspaces and their restriction matrices}\label{sec:restrictionmatrixandproject}

\subsection{The $4$-dimensional invariant subspace}\label{sec:4dinvariantsubspace}
Recall the functions $\psi_1$, $\psi_2$, $\psi_3$ and $\psi_4$ defined in \eqref{ks6}. 
By applying the operator $\mathscr{P}$, we get for a.e. $x \in[0,1]$:
\begin{align*}
    (\mathscr{P}\psi_1)(x) 
    &= \frac{1}{\beta}\sum_{j=0}^{a_0} \chi_{[0,1]}\bigg(\frac{x+j}{\beta}\bigg) \\
    &= \frac{1}{\beta}\sum_{j=0}^{a_0} \chi_{[-j,\beta-j]}(x)\chi_{[0,1]}(x) \\
    &= \frac{1}{\beta}\sum_{j=0}^{a_0-1} \chi_{[0,1]}(x) + \frac{1}{\beta} \chi_{[0,a_1\beta^{-1}]}(x) \\
    &= \frac{a_0}{\beta}\psi_1(x) + \frac{a_1}{\beta^2}\psi_2(x),
\end{align*}
where in the third equality we have used that $-j\leq 0< 1< \beta-j$ when $0\leq j \leq a_0 -1$ due to \eqref{eq:a0betaa0plus1}, and that \eqref{eq:fundamentaleqmoregeneral} implies $\beta-a_0 = a_1\beta^{-1}$.

By a similar computation we get:
\begin{align*}
    (\mathscr{P}\psi_2)(x) 
    &= \psi_1(x).
\end{align*}
Also:
 \begin{align*}
     &(\mathscr{P}\psi_3)(x) \\ 
    &= \frac{4}{\beta} \sum_{j=0}^{a_0} \bigg(\frac{x+j}{\beta}-\frac{1}{2}\bigg) \chi_{[0,1]}\bigg(\frac{x+j}{\beta}\bigg)  \\
    &= \frac{4}{\beta} \sum_{j=0}^{a_0} \bigg(\frac{x+j}{\beta}-\frac{1}{2}\bigg) \chi_{[-j,\beta-j]}(x)\chi_{[0,1]}(x) \\
     &= \frac{4}{\beta} \sum_{j=0}^{a_0-1} \bigg(\frac{x+j}{\beta}-\frac{1}{2}\bigg)\chi_{[0,1]}(x) + \frac{4}{\beta} \bigg(\frac{x+a_0}{\beta} - \frac{1}{2}\bigg)\chi_{[0,a_1\beta^{-1}]}(x) \\
      &=\frac{4}{\beta^2}\bigg( a_0x + \frac{(a_0-1)a_0}{2} -\frac{a_0}{2}\bigg(a_0 + \frac{a_1}{\beta}\bigg)\bigg)\chi_{[0,1]}(x) + \frac{4}{\beta^2} \bigg(x+ a_0 - \frac{a_0}{2} -\frac{a_1}{2\beta}\bigg)\chi_{[0,a_1\beta^{-1}]}(x) \\
      &= -\frac{2a_0a_1}{\beta^3}\psi_1(x) + \frac{2 a_1 a_0}{\beta^3} \psi_2(x) + \frac{a_0}{\beta^2}\psi_3(x) + \frac{a_1^2}{\beta^4} \psi_4(x),
 \end{align*}
 where in the third equality we have used \eqref{eq:a0betaa0plus1}, and in the fourth we have used \eqref{eq:fundamentaleqmoregeneral} twice. 
Similarly, we have
\begin{align*}
    &(\mathscr{P}\psi_4)(x) \\
    &= \frac{1}{\beta}\sum_{j=0}^{a_0} \psi_4\bigg(\frac{x+j}{\beta}\bigg)  \\
    &=\frac{4\beta}{a_1^2} \sum_{j=0}^{a_0-1} \bigg(\frac{x+j}{\beta}- \frac{a_1}{2\beta}\bigg) \chi_{[-j,a_1-j]}(x)\chi_{[0,1]}(x) + \frac{4}{a_1^2} \bigg(x+a_0- \frac{a_1}{2}\bigg) \chi_{[-a_0,a_1-a_0]}(x) \chi_{[0,1]}(x)\\
    &= \frac{4\beta}{a_1^2} \sum_{j=0}^{a_1-1} \bigg(\frac{x+j}{\beta}- \frac{a_1}{2\beta}\bigg) \chi_{[0,1]}(x) + 0 \\
    &= \frac{1}{a_1}\psi_3(x).
\end{align*}
%
Thus the subspace generated by $\psi_1,\psi_2,\psi_3,\psi_4$ is left invariant by $\mathscr{P}$, and the corresponding restriction matrix is given by:
\begin{align*}
    \mathcal{P}_{4} = \begin{bmatrix}
        a_0\beta^{-1} & 1 & -2a_0a_1\beta^{-3} & 0\\
        a_1\beta^{-2} & 0 & 2a_0a_1\beta^{-3}  & 0\\
        0 & 0 & a_0\beta^{-2} & a_1^{-1}\\
        0 & 0 & a_1^2\beta^{-4} & 0
    \end{bmatrix}.
\end{align*}
Note that $\mathcal{P}_4$ is of the form
\begin{align}\label{july13}
    \mathcal{P}_4 = \begin{bmatrix}
        A & B \\
        {\bf 0}_2 & C
    \end{bmatrix},
\end{align}
with ${\bf 0}_2,A,B,C \in \mathbb{R}^{2 \times 2}$. We have
\begin{align*}
    A = \begin{bmatrix}
        a_0 \beta^{-1} & 1 \\
        a_1 \beta^{-2} & 0
    \end{bmatrix},
\end{align*}
and its two eigenvalues $\lambda_1$ and $\lambda_2$ must satisfy that
\begin{align*}
    \mathrm{Tr}(A) &= \frac{a_0}{\beta} = \lambda_1 + \lambda_2, \quad 
    \mathrm{Det}(A) = -\frac{a_1}{\beta^2} = \lambda_1 \lambda_2.
\end{align*}
One can check that $\lambda_1 = 1$ and $\lambda_2 = -a_1\beta^{-2}$ satisfy these conditions.

Similarly, we have
\begin{align*}
    C= \begin{bmatrix}
        a_0\beta^{-2} & a_1^{-1} \\
        a_1^2 \beta^{-4} & 0
    \end{bmatrix}
    \end{align*}
    and its two eigenvalues $\lambda_3$ and $\lambda_4$ satisfy that
\begin{align*}
    \mathrm{Tr}(C) &= \frac{a_0}{\beta^2} = \lambda_3 + \lambda_4 ,\quad 
    \mathrm{Det}(C) = -\frac{a_1}{\beta^4} = \lambda_3 \lambda_4.
\end{align*}
One can again check that $\lambda_3 = \beta^{-1}$ and $\lambda_4 = -a_1\beta^{-3}$ satisfy these conditions.

By a direct computation it follows that for $z \neq \lambda_j, \; j=1,2,3,4$
\begin{align}\label{eq:mathcalPresolventmatrix}
(z\bb1_4 -\mathcal{P}_4)^{-1} =\begin{bmatrix}
        (z\bb1_2 -A)^{-1}& (z\bb1_2 -A)^{-1} \, B \, (z\bb1_2 -C)^{-1}\\
        {\bf 0}_2& (z\bb1_2 -C)^{-1}
    \end{bmatrix}.    
\end{align}
We are interested in computing the Riesz projections corresponding to the eigenvalues $\lambda_1$, $\lambda_2$ and $\lambda_3$ of the matrix $\mathcal{P}_4$, which we denote by $\Pi_1$, $\Pi_2$ and $\Pi_3$, respectively.

For $j \in \{1,2\}$, let $\Gamma_j$ be a positively oriented circle centered at $\lambda_j$ which does not contain any other eigenvalue. Then the projection corresponding to $\lambda_j$ is
\begin{align*}
    \Pi_j=\frac{1}{2\pi i} \int_{\Gamma_j} (z\bb1_4 -\mathcal{P}_4)^{-1} \d z.
\end{align*}
By an elementary computation we get
\begin{align*}
    (z \bb1_2 - A)^{-1} &= \frac{1}{(z-1)(z+a_1\beta^{-2})}\begin{bmatrix}
        z & 1 \\
        a_1\beta^{-2} & z- a_0\beta^{-1}
    \end{bmatrix},
\end{align*}
which by residue calculus implies that
\begin{align*}
     \pi_1 :=\frac{1}{2\pi i} \int_{\Gamma_1} (z\bb1_2 -A)^{-1} \d z &= \frac{1}{\beta^2+a_1}\begin{bmatrix}
        \beta^2 & \beta^2 \\
        a_1 & a_1
    \end{bmatrix} \\
     \pi_2 :=\frac{1}{2\pi i} \int_{\Gamma_2} (z\bb1_2 -A)^{-1} \d z &= -\frac{1}{\beta^2+a_1}\begin{bmatrix}
        -a_1 & \phantom{-}\beta^2 \\
        \phantom{-}a_1 & -\beta^2
    \end{bmatrix}
\end{align*}
and using that $\pi_1 B=\mathbf{0_2}$ (see \eqref{july13}) we have
\begin{align}\label{pi1}
    \Pi_1 = \begin{bmatrix}
        \frac{\beta^2}{\beta^2+a_1} & \frac{\beta^2}{\beta^2+a_1} & 0 &0 \\
        \frac{a_1}{\beta^2+a_1} & \frac{a_1}{\beta^2+a_1} & 0 & 0 \\
        0 & 0 & 0 & 0 \\
        0 & 0 & 0 & 0
    \end{bmatrix}, \qquad 
    \Pi_2 = \begin{bmatrix}
        \frac{a_1}{\beta^2+a_1} & \frac{-\beta^2}{\beta^2+a_1} & \star &\star \\
        \frac{-a_1}{\beta^2+a_1} & \frac{\beta^2}{\beta^2+a_1} & \star & \star \\
        0 & 0 & 0 &0\\
        0 & 0 &0& 0
    \end{bmatrix},
\end{align}
where the stars ``$\star$'' stand for $\pi_2 B(\lambda_2 \bb1_2 -C)^{-1}$ which we do not need to explicitly compute. Note that the first column of $\Pi_1$ and $\Pi_2$ provide the coefficients of $\tilde{u}_1$ and $\tilde{u}_2$ of their expansion in \eqref{hc20}.

Similarly, let $\Gamma_3$ be a positively oriented circle centered at $\lambda_3=\beta^{-1}$, not containing the other eigenvalues. Then the projection corresponding to $\lambda_3$ is 
\begin{align*}
\Pi_3=\frac{1}{2\pi i} \int_{\Gamma_3} (z\bb1_4 -\mathcal{P}_4)^{-1} \d z.
\end{align*}
By another straightforward calculation one gets 
\begin{align*}
    (z \bb1_2 - C)^{-1} &= \frac{1}{(z-\beta^{-1})(z+a_1\beta^{-3})}\begin{bmatrix}
        z & a_1^{-1} \\
        a_1^2\beta^{-4} & z- a_0\beta^{-2}
    \end{bmatrix},
\end{align*}
and hence
\begin{align*}
    \frac{1}{2\pi i} \int_{\Gamma_3} (z\bb1_2 -A)^{-1} \d z = \mathbf{0}_2, \qquad     \pi_3\coloneqq\frac{1}{2\pi i}  \int_{\Gamma_3} (z\bb1_2 -C)^{-1} \d z = \frac{1}{\beta^2 + a_1}\begin{bmatrix}
        \beta^2 & a_1^{-1}\beta^3 \\
        a_1^2\beta^{-1} & a_1
    \end{bmatrix},
\end{align*}
where we have used that $(z\bb1_2-A)^{-1}$ is analytic in a neighborhood of $\lambda_3 = \beta^{-1}$. Notice that we also have
\begin{align*}
    (z \bb1_2-C)^{-1} = \frac{1}{z - \beta^{-1}}\pi_3 + \textup{ holomorphic terms.}
\end{align*}
By \eqref{eq:mathcalPresolventmatrix}, \eqref{july13} and the above calculations, the projection matrix $\Pi_3$ is then
\begin{align*}
    \Pi_3 = \frac{1}{2\pi i} \int_{\Gamma_3} (z\bb1_4 -\mathcal{P}_4)^{-1} \d z = \begin{bmatrix}
        {\bf 0}_2 & (\beta^{-1}\bb1_2 - A)^{-1}B \pi_3 \\
        {\bf 0}_2 & \pi_3
    \end{bmatrix}.
\end{align*}
By yet another calculation, we get
\begin{align*}
     (\beta^{-1}\bb1_2 - A)^{-1}B\pi_3 = \frac{2a_0a_1}{(\beta + a_1)(\beta^2+a_1)}\begin{bmatrix}
        -\beta & -a_1^{-1}\beta^2 \\
    \phantom{-}\beta & \phantom{-} a_1^{-1}\beta^2 
    \end{bmatrix} 
\end{align*}
and finally that 
\begin{align}\label{pi3}
    \Pi_3 = \begin{bmatrix}
        0 & 0 & \frac{-2a_0a_1\beta}{(\beta+a_1)(\beta^2+a_1)} & \frac{-2a_0\beta^{2}}{(\beta+a_1)(\beta^2+a_1)} \\
        0 & 0 & \frac{2a_0a_1\beta}{(\beta+a_1)(\beta^2+a_1)} & \frac{2a_0\beta^{2}}{(\beta+a_1)(\beta^2+a_1)} \\
         0 & 0 & \frac{\beta^2}{(\beta^2+a_1)} & \frac{a_1^{-1}\beta^3}{\beta^2+a_1} \\
          0 & 0 &  \frac{a_1^2\beta^{-1}}{(\beta^2+a_1)} & \frac{a_1}{\beta^2 + a_1} \\
    \end{bmatrix}.
\end{align}
Note that the third column of $\Pi_3$ provides the coefficients in the expansion of $\tilde{u}_3$ in \eqref{hc20}.
\subsection{The $2\nu$-dimensional invariant subspace}
Let $\nu \in \mathbb{N}$, $j \in \{0,1,\ldots,\nu-1\}$ and let us consider the functions $\psi_{2j+1}$ and $\psi_{2(j+1)}$ from \eqref{hor10}, which span a subspace of dimension $2\nu$. We are interested in computing the action of $\mathscr{P}$ on the term containing the highest power $x^j$ in both $\psi_{2j+1}$ and $\psi_{2(j+1)}$. Let $P_{j-1}(x,k)$ denote a generic polynomial in $x$ of degree $j-1$. Then:
\begin{align*}
    &\mathscr{P}\bigg( \frac{x^j \chi_{[0,1]}(x)}{\Vert B_j \Vert_{L^1([0,1])}}\bigg) \\
    &= \frac{\beta^{-1}}{ \Vert B_j \Vert_{L^1([0,1])}} \bigg(\sum_{k=0}^{a_0-1} \bigg( \frac{x^j}{\beta^j} + P_{j-1}(x,k) \bigg) \chi_{[0,1]}(x) + \bigg( \frac{x^j}{\beta^j} + P_{j-1}(x,a_0)\bigg)\chi_{[0,a_1\beta^{-1}]}(x) \bigg) \\
    &= \frac{\beta^{-1}}{ \Vert B_j \Vert_{L^1([0,1])}} a_0 \frac{x^j}{\beta^j} \chi_{[0,1]}(x) + \frac{\beta^{-1}}{ \Vert B_j \Vert_{L^1([0,1])}} \frac{x^j}{\beta^j} \chi_{[0,a_1\beta^{-1}]}(x) + R_{j-1}(x)\\
    &= \frac{a_0}{\beta^{j+1}}\psi_{2j+1}(x) + \frac{a_1^{j+1}}{\beta^{2j+2}}\psi_{2(j+1)}(x) + \tilde{R}_{j-1}(x),
\end{align*}
where in the first equality we have used that $a_0 < \beta < a_0+1$ together with the fundamental equation \eqref{eq:fundamentaleqmoregeneral}, while $R_{j-1}$ and $\tilde{R}_{j-1}$ are remaining terms of lower order, which we are not interested in. Thus we have proved that:
\[
\mathscr{P}\psi_{2j+1}=\sum_{s=0}^{2j} C_s \,\psi_s +\frac{a_0}{\beta^{j+1}}\psi_{2j+1} + \frac{a_1^{j+1}}{\beta^{2j+2}}\psi_{2(j+1)}\, .
\]
Now let us write an expansion for $\mathscr{P}\psi_{2(j+1)}$. 
Since the inequality $0\leq (x+a_0)\beta^{-1}\leq a_1\beta^{-1}$ is equivalent with $-a_0\leq x\leq a_1-a_0\leq 0$, we have that for almost all $x$:
\begin{equation*}
\chi_{[0,a_1 \beta^{-1}]}\big ((x+a_0)\beta^{-1}\big )\, \chi_{[0,1]}(x)=0.
\end{equation*}
This leads to
\begin{align*}
    \mathscr{P}\bigg(  \frac{\beta^{j+1} x^j \chi_{[0,a_1\beta^{-1}]}(x)}{a_1^{j+1}\Vert B_j \Vert_{L^1([0,1])}}\bigg) 
    &=\frac{\beta^{j}}{ a_1^{j+1}\Vert B_j \Vert_{L^1([0,1])}} \bigg(\sum_{k=0}^{a_0-1} \bigg( \frac{x+k}{\beta}\bigg)^j \chi_{[-k,a_1-k]}(x) \chi_{[0,1]}(x) + 0 \bigg) \\
    &= \frac{\beta^{j}}{ a_1^{j+1}\Vert B_j \Vert_{L^1([0,1])}} \bigg(\sum_{k=0}^{a_1-1} \bigg(\frac{x^j}{\beta^j} + P_{j-1}(x,k)\bigg)\chi_{[0,1]}(x) \bigg) \\
    &=\frac{\beta^{j}}{ a_1^{j+1}\Vert B_j \Vert_{L^1([0,1])}} a_1\frac{x^j}{\beta^j}  \chi_{[0,1]}(x) + R_{j-1}(x) \\
    &= \frac{1}{a_1^j} \psi_{2j+1}(x) + \tilde{R}_{j-1}(x),
\end{align*}
where $P_{j-1}$ is again a polynomial of degree $j-1$ in $x$ and $R_{j-1}$ and $\tilde{R}_{j-1}$ contain terms of lower order. We have thus proved that 
\[
\mathscr{P}\psi_{2(j+1)}=\sum_{s=0}^{2j} C_s \,\psi_s +\frac{1}{a_1^j} \psi_{2j+1}\, .
\]

Therefore, the general restriction matrix $\mathcal{P}_{2\nu}\in \R^{2\nu\times 2\nu}$ has the form
\begin{align*}
\mathcal{P}_{2\nu} = \begin{bmatrix}
        A_1 & \star & \star & \star & \star \\
        {\bf 0}_2 & \ddots & \star & \star & \star \\
        {\bf 0}_2 & {\bf 0}_2 &  A_j & \star & \star  \\
        {\bf 0}_2 & {\bf 0}_2 & {\bf 0}_2 &\ddots & \star \\
        {\bf 0}_2 & {\bf 0}_2 & {\bf 0}_2 & {\bf 0}_2 & A_{\nu}
    \end{bmatrix},
\end{align*}
where the stars denote some $2\times 2$ entries which we do not need to explicitly compute and $A_1,\ldots,A_{\nu} \in \mathbb{R}^{2\times 2}$ are given by 
\[
A_k=\begin{bmatrix}
    a_0\beta^{-k} & a_1^{-(k-1)}\\
    a_1^k \beta^{-2k} & 0
\end{bmatrix},\quad 1\leq k\leq \nu. 
\]

The spectrum of $\mathcal{P}_{2\nu}$ equals the union of spectra of $A_1,\ldots,A_{\nu}$. The spectrum of $A_k$  consists of two simple eigenvalues given by 
\begin{align*}
    \lambda_{2k-1}=\beta^{-k+1},\, \lambda_{2k}=-a_1\beta^{-k-1},\quad  1\leq k\leq \nu,
\end{align*}
with $\vert \lambda_i \vert \leq \beta^{-2}$ for every $i \geq 4$. We denote by $\{e_j\}_{1\leq j\leq 2\nu}$ the elements of the standard basis in $\C^{2\nu}$. Let us denote by $\Pi_j\in \C^{2\nu\times 2\nu}$ the rank-one projection corresponding to the eigenvalue $\lambda_j$. 

If $\Gamma_j$ is a positively oriented circle centered at $\lambda_j$ and not containing other eigenvalues we denote 
$$\pi_{j,k}:=\frac{1}{2\pi i} \int_{\Gamma_j} (z\bb1_2 -A_k)^{-1} \d z ,\quad 1\leq j\leq 2\nu,\quad 1\leq k\leq \nu.$$
This matrix equals $\mathbf{0}_2$  if $j$ is neither $2k-1$ nor $2k$, otherwise $\pi_{j,k}$ equals a rank-one orthogonal projection which can be explicitly computed and the first element of its first column is always different from zero. 
One can prove as in the case with $\nu=2$ that
\begin{align*}
\Pi_{j} = \begin{bmatrix}
        {\bf 0}_2 & \star & \star &  \star \\
        {\bf 0}_2 & \ddots & \star &  \star \\
        {\bf 0}_2 & {\bf 0}_2 &  \hspace{-0.5cm} \pi_{j,k}  & \star  \\
        {\bf 0}_2 & {\bf 0}_2 &\,\,  \quad  \ddots  &\star\\
        {\bf 0}_2 & {\bf 0}_2 & {\bf 0}_2  & {\bf 0}_2
    \end{bmatrix},\quad j\in \{2k-1,2k\},\quad 1\leq k\leq \nu. 
\end{align*}
This implies that the column number $2k-1$ of both $\Pi_{2k-1}$ and $\Pi_{2k}$ contains at least one non-zero element for all $1\leq k\leq \nu$.  Thus the vectors \[u_{1}:=\Pi_1 e_1,\, u_2:=\Pi_2 e_1, \, u_3:=\Pi_3 e_3,\, u_4:=\Pi_4 e_3,\, \dots , \, u_{2\nu-1}:=\Pi_{2\nu-1} e_{2\nu-1},\, u_{2\nu}:=\Pi_{2\nu} e_{2\nu-1}\]
 are all nonzero and are eigenvectors of $\mathcal{P}_{2\nu}$. 
 
 Since all eigenvalues are simple, these eigenvectors form a basis in $\C^{2\nu}$ and we may write 
\begin{equation*}
e_m=\sum_{i=1}^{2\nu} C_i^{(m)} u_i,\quad 1\leq m\leq 2\nu. 
\end{equation*}
Using the properties $\Pi_i\Pi_j=\Pi_i^2 \delta_{ij}$ and $\Pi_j u_i =u_j \delta_{ij}$, by applying $\Pi_{2k-1}$ to both sides of the above identity we have that for every $1\leq k\leq \nu$: 
\begin{equation}\label{hor3}
u_{2k-1}=\Pi_{2k-1} e_{2k-1}= C_{2k-1}^{(2k-1)}u_{2k-1}\Longrightarrow  C_{2k-1}^{(2k-1)}=1.
\end{equation}
%
%
%
%
Let $\mathcal{P}_{2\nu}=(p_{kj})_{1\leq k,j\leq 2\nu}$. Then $\tilde{u}_i(x) := \sum_{j=1}^{2\nu} (u_i)_j \psi_j(x)$, for $i \in \{1,2,\ldots,2\nu\}$, satisfies
\begin{align*}
    (\mathscr{P}\tilde{u}_i)(x) 
    = \sum_{j=1}^{2\nu} (u_i)_j \sum_{k=1}^{2\nu} p_{kj}\psi_k(x) 
    = \sum_{k=1}^{2\nu} \big(\mathcal{P}_{2\nu} u_i \big)_k\psi_k(x) 
    =\lambda_i \sum_{k=1}^{2\nu}  (u_i)_k\psi_k(x) 
    = \lambda_i \tilde{u}_i(x)
\end{align*}
and thus $\tilde{u}_i$ is an eigenfunction of $\mathscr{P}$. These eigenfunctions span the invariant subspace generated by the $\psi_m$'s.  Thus for all $1 \leq m \leq 2\nu$, we have
\begin{align*}
    \psi_m = \sum_{i=1}^{2\nu} C_i^{(m)} \tilde{u}_i,\quad C_{2k-1}^{(2k-1)}=1,\quad 1\leq k\leq \nu,
\end{align*}
 where in the second equality we used  \eqref{hor3}. 
Let $r \in \mathbb{N}$. By repeatedly applying $\mathscr{P}$ to $\psi_m$, we get
\begin{align}\label{eq:mathscrPrpsim}
    \mathscr{P}^r\psi_m &= \sum_{i=1}^{2\nu} C_i^{(m)} \lambda_i^r \tilde{u}_i = C_1^{(m)}\tilde{u}_1 + C_2^{(m)}\bigg( \frac{-a_1}{\beta^2} \bigg)^r\tilde{u}_2 + C_3^{(m)}\beta^{-r}\tilde{u}_3 + \mathcal{O}\big( \beta^{-2r}\big).
\end{align}

\begin{lemma}\label{lem:C1m}
    We have
    \begin{align*}
        C_1^{(m)} = \int_0^1 \psi_m(x) \d x
    \end{align*}
    which leads to
    \begin{align*}
        C_1^{(1)} = C_1^{(2)} = 1, \qquad \textup{ and } \qquad C_1^{(m)}=0, \qquad m \geq 3. 
    \end{align*}
    Moreover, 
    \begin{align}\label{hor1}
    \mathscr{P}^r\psi_3 &=  C_2^{(3)}\bigg( \frac{-a_1}{\beta^2} \bigg)^r\tilde{u}_2 + \beta^{-r}\tilde{u}_3 + \mathcal{O}\big( \beta^{-2r}\big)=\beta^{-r}\tilde{u}_3+\mathcal{O}\bigg(\bigg( \frac{a_1}{\beta^2} \bigg)^r\bigg ).
\end{align}
\end{lemma}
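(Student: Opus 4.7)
My plan is to extract the coefficient $C_1^{(m)}$ by integrating both sides of the expansion $\psi_m = \sum_{i=1}^{2\nu} C_i^{(m)} \tilde{u}_i$ over $[0,1]$, and then to read off the remaining assertions from the explicit structure of the $\psi_m$'s combined with \eqref{eq:mathscrPrpsim} and \eqref{hor3}.

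The first step is to combine the eigenvalue identity $\mathscr{P}\tilde{u}_i = \lambda_i \tilde{u}_i$ with the measure-preserving property from Lemma~\ref{lem:propertiesofmathscrP}(3), namely $\int_0^1 \mathscr{P}f\, dx = \int_0^1 f\, dx$. These together force $\int_0^1 \tilde{u}_i\, dx = \lambda_i \int_0^1 \tilde{u}_i\, dx$ for every $i$. Since every eigenvalue $\lambda_i$ with $i\geq 2$ satisfies $|\lambda_i|<1$ (in particular $\lambda_i\neq 1$), this forces $\int_0^1 \tilde{u}_i\, dx = 0$ for each $i\geq 2$, while $\int_0^1 \tilde{u}_1\, dx = 1$ is already recorded in \eqref{hor5}. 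Integrating the decomposition $\psi_m = \sum_{i=1}^{2\nu} C_i^{(m)}\tilde{u}_i$ over $[0,1]$ then isolates $C_1^{(m)} = \int_0^1 \psi_m(x)\, dx$, which is the stated identity.

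The explicit values then follow from the definitions in \eqref{hor10}. Because $B_0 \equiv 1$, the functions $\psi_1$ and $\psi_2$ are non-negative and were normalized to have $L^1$-norm equal to one, so $C_1^{(1)} = C_1^{(2)} = 1$. For $m \geq 3$, the function $\psi_m$ is (up to an affine rescaling of its argument) the Bernoulli polynomial $B_s$ with $s\geq 1$. The elementary identity $\int_0^1 B_s(u)\, du = 0$ for $s\geq 1$, combined with the change of variable $u = \beta x/a_1$ for the $\psi_{2(s+1)}$'s supported on $[0,a_1/\beta]$, then gives $\int_0^1 \psi_m\, dx = 0$, so $C_1^{(m)} = 0$.

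Finally, for the moreover part I would specialize \eqref{eq:mathscrPrpsim} to $m=3$. The vanishing $C_1^{(3)} = 0$ just established removes the $\tilde{u}_1$ term, while $C_3^{(3)} = 1$ from \eqref{hor3} fixes the coefficient in front of $\beta^{-r}\tilde{u}_3$. This yields the first equality in \eqref{hor1}. The second equality follows because $a_1 \geq 1$ implies $\beta^{-2r} \leq (a_1/\beta^2)^r$, so both the $\tilde{u}_2$ contribution and the $\mathcal{O}(\beta^{-2r})$ error are absorbed into a single $\mathcal{O}((a_1/\beta^2)^r)$. I do not expect any substantial obstacle; the only point that requires care is verifying that $\lambda_i \neq 1$ for all $i\geq 2$, which is immediate from the explicit list $\lambda_{2j-1} = \beta^{-(j-1)}$, $\lambda_{2j} = -a_1\beta^{-(j+1)}$ with $\beta > 1$.
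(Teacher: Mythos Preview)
Your proof is correct and follows essentially the same approach as the paper: both arguments hinge on the measure-preserving property $\int_0^1 \mathscr{P}f = \int_0^1 f$ from Lemma~\ref{lem:propertiesofmathscrP}(iii) and the eigenfunction expansion, and the handling of \eqref{hor1} via $C_1^{(3)}=0$ and $C_3^{(3)}=1$ is identical. The only cosmetic difference is that the paper integrates \eqref{eq:mathscrPrpsim} and lets $r\to\infty$ to isolate $C_1^{(m)}$, whereas you first deduce $\int_0^1 \tilde{u}_i\,dx=0$ for $i\ge 2$ from $(1-\lambda_i)\int_0^1\tilde{u}_i\,dx=0$ and then integrate the static decomposition; the two routes are equivalent.
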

\begin{proof}
    Let $r \in \mathbb{N}$. From Lemma~\ref{lem:propertiesofmathscrP}$(iii)$ we have that for all $r\geq 1$:
    \begin{align*}
        \int_0^1 (\mathscr{P}^r\psi_m)(x) \d x = \int_0^1 \psi_m(x) \d x,
    \end{align*}
    which implies
\begin{align*}
    \int_0^1 \psi_m(x) \d x = \int_0^1 \bigg( C_1^{(m)}\tilde{u}_1(x) + C_2^{(m)}\bigg( \frac{-a_1}{\beta^2} \bigg)^r\tilde{u}_2(x) + C_3^{(m)}\beta^{-r}\tilde{u}_3(x) + \mathcal{O}\big( \beta^{-2r}\big) \bigg) \d x.
\end{align*}
By taking the limit $r \to \infty$, we get
\begin{align*}
    \int_0^1 \psi_m(x) \d x = \int_0^1 C_1^{(m)}\tilde{u}_1(x) \d x = C_1^{(m)},
\end{align*}
since $\int_0^1\tilde{u}_1(x)dx=1$. Then we note from \eqref{hor10} that 
\begin{align*}
    \int_0^1 \psi_m \d x = \begin{cases}
        1 & \textup{if } m \in \{1,2\}, \\
        0 & \textup{if } m \geq 3.
    \end{cases}
\end{align*}
Finally, \eqref{hor1} follows from \eqref{eq:mathscrPrpsim} in which we put $m=3$ and we use that $C_3^{(3)}=1$ (see \eqref{hor3}) and $C_1^{(3)}=0$.  
\end{proof}
Combining \eqref{eq:mathscrPrpsim} with Lemma~\ref{lem:C1m} gives for $r \in \mathbb{N}$, that
\begin{equation}\label{hor2}
\begin{aligned}
    (\mathscr{P}^r\psi_1)(x)  &= \tilde{u}_1(x) + \mathcal{O}\big( a_1^r\beta^{-2r} \big),  \\
    (\mathscr{P}^r\psi_3)(x)  &= \beta^{-r}\tilde{u}_3(x) + \mathcal{O}\big( a_1^r\beta^{-2r} \big),  \\
    (\mathscr{P}^r\psi_m)(x)  &= \mathcal{O}\big( \beta^{-r} \big), \quad \forall\, m\geq 4.
\end{aligned}
\end{equation}
\section{Proof of Lemma~\ref{lemmaPk}}\label{sec:proofoflemmaasymptformulapk}
We work with partitions of $[0,1]$ where the distance between any two consecutive points is $\beta^{-|\K_n|}\simeq\beta^{-M}$ and \eqref{hor13} holds. When $k > M+1$, the linearity of $\mathscr{P}$ and Lemma \ref{lemmacrux} imply that in $L^\infty$ sense 
\begin{align}\label{hc22}
   & \big(\mathscr{P}^k F\big)(y) = \sum_{\K_n,\J_n} \bigg\{  \mathscr{P}^{k-|\K_n|}\big(\psi_1(y)\big) \int_{t_{\K_n}^{\J_n}}^{t_{\K_n}^{\J_{n-1},j_n+1}} F(u) \d u  \\\nonumber 
        &+ \sum_{s=1}^N \beta^{-|\K_n|s}\Big( F^{(s-1)}\big(t_{\K_n}^{\J_{n-1},j_n+1}\big)-F^{(s-1)}\big(t_{\K_n}^{\J_{n}}\big)\Big)  \frac{1}{s!}\Vert B_s \Vert_{L^1([0,1])} \mathscr{P}^{k-|\K_n|} \big(\psi_{2s+1}(y)\big)\bigg\} \\ \nonumber 
        & +\mathcal{O}\Big(\beta^{ k-MN} \big \Vert F^{(N)}\Vert_{L^\infty([0,1])} \Big)
\end{align}
where in the last error term we used Lemma \ref{lem:propertiesofmathscrP}(ii). 
First, by \eqref{hor2} we have that the first term in \eqref{hc22} satisfies
\begin{align*}
  \sum_{\K_n,\J_n}  \mathscr{P}^{k-|\K_n|}\big(\psi_1(y)\big)\, \int_{t_{\K_n}^{\J_n}}^{t_{\K_n}^{\J_{n-1},j_n+1}} F(u) \d u &=\tilde{u}_1(y) \, \int_0^1 F(u) \d u + \mathcal{O}\bigg( \bigg( \frac{a_1}{\beta^2}\bigg)^{k-M}\Vert F\Vert_{L^\infty([0,1])}\bigg).  
\end{align*}

Secondly, using \eqref{hor2} again, the term with $s=1$ in \eqref{hc22}  satisfies (here $\Vert B_1 \Vert_{L^1([0,1])}=1/4$)
\begin{align*}
     &\sum_{\K_n,\J_n}\beta^{-|\K_n|}\frac{F\big(t_{\K_n}^{\J_{n-1},j_n+1}\big)-F(t_{\K_n}^{\J_{n}})}{4} \mathscr{P}^{k-|\K_n|} \big(\psi_3(y)\big)\\
     & =\sum_{\K_n,\J_n}\beta^{-|\K_n|}\frac{F\big(t_{\K_n}^{\J_{n-1},j_n+1}\big)-F\big(t_{\K_n}^{\J_{n}}\big)}{4} \bigg( \beta^{-k+|\K_n|}\tilde{u}_3(y) + \mathcal{O}\bigg( \bigg(\frac{a_1}{\beta^2}\bigg)^{k-M} \bigg)\bigg)\\
     & =\frac{F(1)-F(0)}{4}\beta^{-k}\tilde{u}_3(y) + \mathcal{O}\bigg(  \beta^{-M}\bigg(\frac{a_1}{\beta^2}\bigg)^{k-M} \Vert F'\Vert_{L^\infty} \bigg).
\end{align*}
Lastly, by using \eqref{hor2}, the remaining terms in the sum, i.e. $s \geq 2$, satisfy
\begin{align*}
    &\sum_{\K_n,\J_n}\sum_{s=2}^N \beta^{-s|\K_n|}\Big (F^{(s-1)}\big(t_{\K_n}^{\J_{n-1},j_n+1}\big)-F^{(s-1)}\big(t_{\K_n}^{\J_{n}}\big)  \Big ) \frac{1}{s!}\Vert B_s \Vert_{L^1([0,1])} \mathscr{P}^{k-|\K_n|} \big(\psi_{2s+1}(y)\big)\\
    &\qquad = \sum_{s=2}^N\beta^{-sM}\Vert F^{(s)}\Vert_{L^\infty} \, \mathcal{O}\big( \beta^{-k+M} \big) = \mathcal{O}\big( \beta^{-k-M} \Vert F\Vert_{C^N([0,1])}\big).
\end{align*}
Adding all these contributions together leads to \eqref{hor11}. \qed

\vspace{0.3cm}

{\bf Acknowledgments.} This work was supported by the Independent Research Fund Denmark–
Natural Sciences, grant DFF–10.46540/2032-00005B. We also thank the referees for their  valuable comments and suggestions which led to a significant improvement of the presentation.

\end{document}